\documentclass[a4paper,oneside,11pt, reqno]{amsproc}
\usepackage{amsthm}
\usepackage{amsfonts}
\usepackage{amsmath}
\usepackage{amssymb}
\usepackage{mathrsfs}
\usepackage{epsfig}
\usepackage{graphicx}
\usepackage{epstopdf}
\usepackage{tikz-cd}
\usepackage{color}
\usepackage{ifthen}
\usepackage{float}
\usepackage{enumitem}

\makeatletter
\@namedef{subjclassname@2010}{%
\textup{2010} Mathematics Subject Classification}
\makeatother

\newtheorem{theorem}{Theorem}[section]
\newtheorem{proposition}[theorem]{Proposition}
\newtheorem{lemma}[theorem]{Lemma}

\theoremstyle{remark}
\newtheorem{remark}[theorem]{Remark}

\newcommand\Item[1][]{%
  \ifx\relax#1\relax  \item \else \item[#1] \fi
  \abovedisplayskip=0pt\abovedisplayshortskip=0pt~\vspace*{-\baselineskip}}

\theoremstyle{definition}
\newtheorem{definition}[theorem]{Definition}

\newcommand{\bq}{\begin{equation}}
\newcommand{\eq}{\end{equation}}
\newcommand{\beqn}{\begin{eqnarray*}}
\newcommand{\eeqn}{\end{eqnarray*}}
\newcommand{\beq}{\begin{eqnarray}}
\newcommand{\eeq}{\end{eqnarray}}

\newcommand{\rar}{\rightarrow}

\newcommand{\bc}{\begin{centre}}
\newcommand{\ec}{\end{centre}}

\newcommand{\ba}{\begin{array}}
\newcommand{\ea}{\end{array}}

\newcommand{\inp}[2]{\langle{#1},\,{#2} \rangle}

\begin{document}
\title[Weakly $\mathcal U(d)$-homogeneous]{Weakly $\mathcal U(d)$-homogeneous commuting tuple of bounded operators}
\author[S. Ghara]{Soumitra Ghara}
\author[S. Kumar]{Surjit Kumar}
\author[S. Trivedi]{Shailesh Trivedi}
\address{Department of Mathematics\\
Indian Institute of Technology  Kharagpur, Midnapore-721302, India}
   \email{soumitra@maths.iitkgp.ac.in\\ ghara90@gmail.com}
\address{Department of Mathematics \\  Indian Institute of Technology Madras, Chennai  600036, India}
\email{surjit@iitm.ac.in}
\address{Department of Mathematics\\
Birla Institute of Technology and Science, Pilani, Pilani Campus, Vidya Vihar, Pilani, Rajasthan 333031, India}
\email{shailesh.trivedi@pilani.bits-pilani.ac.in}

\thanks{The work of the first author is supported by INSPIRE Faculty Fellowship (DST/INSPIRE/04/2021/002555). The work of the second author was supported by the Anusandhan National Research Foundation (ANRF) through an IRG research grant (Ref. No. ANRF/IRG/2024/000432/MS). The work of the third author was partially supported by SERB-SRG (SRG/2023/000641-G) and OPERA (FR/SCM/03-Nov-2022/MATH)}
   \subjclass[2010]{Primary 47A13, 47B37, Secondary 47B32}
\keywords{weakly $\mathcal U(d)$-homogeneous, multishift, spherically balanced, homogeneous polynomial}

\date{}

\begin{abstract}
We introduce and study the weakly $\mathcal U(d)$-homogeneous commuting tuple of operators. We provide a sufficient condition under which a weakly $\mathcal U(d)$-homogeneous tuple is similar to a $\mathcal U(d)$-homogeneous tuple. Further, we focus our attention to multishifts and completely characterize weakly $\mathcal U(d)$-homogeneous multishifts. In particular, we show that a multishift is weakly $\mathcal U(d)$-homogeneous if and only if it similar to a $\mathcal U(d)$-homogeneous multishift. The results for multishifts are further refined for the class of spherically balanced multishifts.
\end{abstract}

\maketitle

\section{Introduction}

The study of homogeneous operator tuples has gained significant attention in recent years (see \cite{MS-1, MU, MK1}), and more recently, interest has expanded to include operator tuples that are homogeneous under the action of a compact group, see for instance \cite{CY, GKP, GKMP} and references therein. 
Recall that a bounded linear operator $T $ defined  on a complex separable Hilbert space with $\sigma(T) \subseteq \overline{\mathbb{D}}$ is said to be {\it homogeneous} if $T$ is unitarily equivalent to $\varphi(T)$ for all  $\varphi \in \mathrm{M\ddot ob},$   the group of biholomorphic  automorphism of the unit disc $\mathbb{D}.$
A complete classification of all homogeneous operators in the Cowen-Douglas class $B_1(\mathbb{D})$ was obtained by Misra \cite{Misra}.  In \cite{BM-1}, Bagchi and Misra provided a complete classification of homogeneous (scalar) weighted shifts.  All irreducible homogeneous operators in the class $B_2(\mathbb{D})$ were classified in  \cite{Wilkins}. Moreover,  Misra and Korányi \cite{MK} gave a complete description of homogeneous operators in the $B_n(\mathbb{D})$ capitalizing on the techniques from complex geometry and representation theory.

The notion of a weakly homogeneous operator  is obtained by replacing the unitary equivalence with similarity in the definition of a homogeneous operator \cite{C-M}. It is immediate that any operator similar to a homogeneous operator is weakly homogeneous. Moreover,  if $T$ is similar to a homogeneous operator then it must be M\"obius bounded, that is, $\sup_{\varphi\in \mathrm{M\ddot ob}}\|\varphi(T)\|$ is finite. In \cite{B-M}, a family of weakly homogeneous operators that fail to be M\"obius bounded was obtained, showing that not every weakly homogeneous operator is similar to a homogeneous operator.
In the same work, the authors asked whether every M\"obius bounded weakly homogeneous operator is similar to a homogeneous operator. This was later answered negatively by exhibiting a class of M\"obius bounded weakly homogeneous weighted shift operators which are not similar to any homogeneous operator \cite[Theorem 5.3]{Ghara}. This motivates us to study the operator tuples that are weakly homogeneous under the action of a compact group. In this note, the underlying group is the group $\mathcal U(d)$ of $d \times d$ unitary matrices. Before proceeding towards characterization of weakly $\mathcal U(d)$-homogeneous operator tuples, we set the notations as follows.

Let $\mathcal H$ be a complex separable Hilbert space and let $\mathcal B(\mathcal H)$ denote the algebra of bounded linear operators on $\mathcal H.$ For a set $X$ and a positive integer $d$, we denote by $X^{d}$ the $d$-fold Cartesian product of $X$. The symbols $\mathbb{N}$, $\mathbb{R}$, and $\mathbb{C}$ stands for the sets of non-negative integers, real numbers, and complex numbers, respectively. For  multi-index $\alpha = (\alpha_{1},\ldots,\alpha_{d}) \in \mathbb{N}^{d}$, we write
\[
|\alpha| := \sum_{j=1}^{d} \alpha_{j}, \qquad 
\alpha! := \prod_{j=1}^{d} \alpha_{j}!.
\]

The group $\mathcal{U}(d)$ acts on $\mathbb{C}^{d}$ via $z \mapsto u \cdot z$, where $u \cdot z$ denotes the usual matrix product. A $d$-tuple $T = (T_{1},\ldots,T_{d})$ of commuting bounded linear operators $T_1,\ldots,T_d$ on $H$ is said to be {\it $\mathcal{U}(d)$-homogeneous} (also called {\it spherical}) if, for every $u \in \mathcal U(d)$,  the tuple $u\cdot T$ is unitarily equivalent to $T.$
A complete characterization of $\mathcal U(d)$-homogeneous (scalar valued) weighted multishifts is given in \cite[Theorem 2.1]{CY}. 
As in the case of weakly homogeneous operator, replacing unitary equivalence by similarity in the definition of $\mathcal U(d)$-homogeneous operator tuple gives rise to the notion of a {\it weakly $\mathcal U(d)$-homogeneous} operator tuple.

\begin{definition}
A commuting $d$-tuple $T=(T_1,\ldots,T_d)$ of bounded linear operators on a complex separable Hilbert space $\mathcal H$ is said to be weakly $\mathcal U(d)$-homogeneous if for every $u \in \mathcal U(d)$,  there exists a bounded invertible operator  $\Gamma(u)$ on $\mathcal H$ such that
\[
\Gamma(u)\, T_j = (u \cdot T)_j\, \Gamma(u), \qquad j = 1,\ldots,d.
\]
\end{definition}
Since similarity and unitary equivalence coincide for normal operators, it follows that  a  $d$-tuple of commuting normal operator is weakly U(d)-homogeneous  if and only if it is $\mathcal U(d)$-homogeneous. Note that every $\mathcal U(d)$-homogeneous operator tuple is weakly $\mathcal U(d)$-homogeneous. Moreover, any commuting tuple of bounded linear operators  similar to a $\mathcal U(d)$-homogeneous tuple is  necessarily weakly $\mathcal U(d)$-homogeneous. 
%

We briefly describe below the layout of this paper. 

In section $2,$ we begin with a sufficient condition for a weakly $\mathcal U(d)$-homogeneous operator tuple to be similar to a $\mathcal U(d)$-homogeneous tuple. In fact, we prove that every weakly $\mathcal U(d)$-homogeneous tuple along with the condition that the  intertwining operator $\Gamma(u)$ induces a strongly continuous representation $u \to \Gamma(u)$ of the group $\mathcal U(d),$ must be similar to a $\mathcal U(d)$-homogeneous tuple. Further, we show that the aforementioned sufficient condition holds for a weakly $\mathcal U(d)$-homogeneous tuple $\mathscr M_z=(\mathscr M_{z_1},\ldots, \mathscr M_{z_d})$ of multiplication operators by the coordinate functions $z_j$, $j = 1,\ldots, d$, on a reproducing kernel Hilbert space $\mathscr H(\kappa)$ of holomorphic functions on the open unit ball $\mathbb B_d$ of $\mathbb C^d$ with  $\mathscr H(\kappa) = \bigoplus_{n\in \mathbb N} \mathrm{Hom}(n)$, where $\mathrm{Hom}(n)$ denotes the space of homogeneous polynomials of degree $n$.

In Section $3$, we restrict our attention to the class of weighted multishifts. Recall that every weighted multishift is unitarily equivalent to a tuple of multiplication operators by the coordinate functions on a Hilbert space of formal power series $\mathcal H^2(\beta)$ (see \cite[Proposition 8]{JL}). Within this class, we first show that  weak $\mathcal U(d)$-homogeneity is equivalent to the boundedness of the composition operator $f \mapsto f\circ u$ on $\mathcal H^2(\beta)$ for all $u\in \mathcal U(d).$ Then using this equivalence, we show that the sufficient condition obtained in Section 2 is automatic for this class too. That is,  every weakly $\mathcal U(d)$-homogeneous weighted multishift is similar to a $\mathcal U(d)$-homogeneous weighted multishift.
Furthermore, a criterion for weak $\mathcal U(d)$-homogeneity of the weighted multishift is provided in terms of its moments.
 
Section $4$ is devoted to a complete characterization of balanced weighted multishifts (see \cite[Definition~1.5]{C-K}) that are weakly $\mathcal U(d)$-homogeneous. In particular, we show that if the multiplication $d$-tuple $\mathscr M_z$ on $\mathcal H^2(\mu)$ is weakly $\mathcal U(d)$-homogeneous, then it must be similar to the Szegő shift. Here $\mathcal H^2(\mu)$ denotes the closure of polynomials in $L^2(\partial\mathbb B_d,\mu)$, where $\mu$ is a finite positive Reinhardt (Borel) measure supported on the unit sphere $\partial\mathbb B_d$.


\section{A sufficient condition} 

In this section, we provide a sufficient condition under which a weakly $\mathcal U(d)$-homogeneous operator tuple is similar to a $\mathcal U(d)$-homogeneous tuple. Further, we show that this condition is automatically satisfied by a weakly $\mathcal U(d)$-homogeneous $d$-tuple $\mathscr M_z$ on the class of reproducing kernel Hilbert spaces $\mathscr H(\kappa)$ of holomorphic functions on the open unit ball $\mathbb B_d$ of $\mathbb C^d$ with  $\mathscr H(\kappa) = \bigoplus_{n\in \mathbb N} \mathrm{Hom}(n)$.

The techniques used in the proof of the following proposition are standard in representation theory, we include the details for the sake of completeness.

\begin{proposition}\label{gen-thm}
Let $T = (T_1, \ldots, T_d)$ be a commuting tuple of bounded linear operators on a complex separable Hilbert space $\mathcal H$. Suppose that $T$ is weakly $\mathcal U(d)$-homogeneous and $\Gamma(u)$ be a bounded invertible operator on $H$ such that
\[\Gamma(u)T_j = (u\cdot T)_j \Gamma(u ),\ \text{ for all } j=1, \ldots, d \ \text{ and }\ u \in \mathcal U(d).\]
If $u \mapsto \Gamma(u)$ is a strongly continuous representation of\, $\mathcal U(d)$, then $T$ is similar to a $\mathcal U(d)$-homogeneous tuple on $\mathcal H$. 
\end{proposition}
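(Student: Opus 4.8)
The plan is to average the given (not necessarily unitary) intertwiners $\Gamma(u)$ against Haar measure on the compact group $\mathcal U(d)$ to manufacture a positive operator that implements a similarity between $T$ and a genuinely $\mathcal U(d)$-homogeneous tuple. Concretely, first I would verify that $u\mapsto\Gamma(u)$ can be taken to be a strongly continuous \emph{homomorphism}: from the intertwining relations $\Gamma(u)T_j=(u\cdot T)_j\Gamma(u)$ and $\Gamma(v)T_j=(v\cdot T)_j\Gamma(v)$ one checks that $\Gamma(uv)^{-1}\Gamma(u)\Gamma(v)$ commutes with every $T_j$; so while $\Gamma$ need not be a homomorphism on the nose, the hypothesis that we are handed a strongly continuous representation $u\mapsto\Gamma(u)$ is exactly what lets us bypass this. (I would remark that by compactness $\sup_{u}\|\Gamma(u)\|<\infty$ and $\sup_u\|\Gamma(u)^{-1}\|<\infty$, using strong continuity plus the uniform boundedness principle, so $\Gamma$ is uniformly bounded above and below.)

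Next, define the operator
\[
S \;:=\; \int_{\mathcal U(d)} \Gamma(u)^{*}\Gamma(u)\, du,
\]
the integral taken in the weak (hence strong) operator topology against normalized Haar measure $du$; strong continuity of $u\mapsto\Gamma(u)$ makes $u\mapsto\langle\Gamma(u)^*\Gamma(u)x,x\rangle=\|\Gamma(u)x\|^2$ continuous, so the integral is well defined. Then $S$ is positive, bounded, and bounded below (because $\|\Gamma(u)^{-1}\|$ is uniformly bounded gives $\|\Gamma(u)x\|\ge c\|x\|$), hence $S$ is invertible and admits a bounded invertible positive square root $S^{1/2}$. The key computation is the invariance identity: for fixed $v\in\mathcal U(d)$, the substitution $u\mapsto uv$ together with $\Gamma$ being a homomorphism gives
\[
\Gamma(v)^{*} S\, \Gamma(v) \;=\; \int_{\mathcal U(d)} \Gamma(v)^{*}\Gamma(u)^{*}\Gamma(u)\Gamma(v)\,du
\;=\;\int_{\mathcal U(d)}\Gamma(uv)^{*}\Gamma(uv)\,du \;=\; S,
\]
by right-invariance of Haar measure. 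Thus $V(v):=S^{1/2}\Gamma(v)S^{-1/2}$ satisfies $V(v)^{*}V(v)=S^{-1/2}\Gamma(v)^{*}S\Gamma(v)S^{-1/2}=I$, i.e.\ each $V(v)$ is unitary.

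Finally, set $\widetilde T_j := S^{1/2}T_j S^{-1/2}$, so $\widetilde T=(\widetilde T_1,\dots,\widetilde T_d)$ is similar to $T$ via $S^{1/2}$ and is again a commuting tuple. Conjugating the intertwining relation $\Gamma(u)T_j=(u\cdot T)_j\Gamma(u)$ by $S^{1/2}$ gives $V(u)\widetilde T_j=(u\cdot\widetilde T)_j V(u)$ for all $u$ and all $j$ (here one uses that the linear map $T\mapsto u\cdot T$ commutes with the conjugation $A\mapsto S^{1/2}AS^{-1/2}$ applied entrywise, which is immediate since $u\cdot T$ is a fixed linear combination of the $T_j$). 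Since each $V(u)$ is unitary, this says precisely that $\widetilde T$ is $\mathcal U(d)$-homogeneous, and $T$ is similar to it. The only genuinely delicate point is the measurability/continuity bookkeeping needed to make the operator-valued integral legitimate and to justify the uniform bounds $\sup_u\|\Gamma(u)\|,\sup_u\|\Gamma(u)^{-1}\|<\infty$; once strong continuity is in hand this is routine, which is why the hypothesis is stated the way it is. I would also note in passing that checking $u\mapsto V(u)$ is itself strongly continuous (it is, being a composition of strongly continuous maps with the fixed bounded operators $S^{\pm1/2}$) is not needed for the statement but is worth recording.
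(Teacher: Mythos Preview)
Your proof is correct and follows essentially the same route as the paper: both use the standard unitarization trick for compact groups, integrating $\Gamma(u)^*\Gamma(u)$ against Haar measure. The paper phrases this as a new inner product $\ll x,y\gg=\int\langle\Gamma(u)x,\Gamma(u)y\rangle\,du$ and then invokes an isometry $S$ from $(\mathcal H,\ll\cdot,\cdot\gg)$ onto $\mathcal H$, whereas you work directly with the positive operator $S=\int\Gamma(u)^*\Gamma(u)\,du$ and conjugate by $S^{1/2}$; these are the same construction in different clothing (the paper's $S$ is your $S^{1/2}$ up to a unitary factor).
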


\begin{proof}
Suppose that the map $\pi : \mathcal U(d) \to \mathcal B(\mathcal H)$ defined by $\pi(u) = \Gamma(u)$ is a strongly continuous representation of $\mathcal U(d)$. Consequently, the map $u \mapsto \|\pi(u)x\|$ is a continuous map on $\mathcal U(d)$ for each $x \in \mathcal H$. Since $\mathcal U(d)$ is compact, we get that 
\[\sup_{u \in \mathcal U(d)} \|\pi(u)x\| \leqslant M(x) < \infty.\]
By uniform boundedness principle, we obtain that 
\[c := \sup_{u \in \mathcal U(d)} \|\pi(u)\| < \infty.\]
Thus, $\pi$ is a strongly continuous uniformly bounded representation of the compact group $\mathcal U(d)$. Let $du$ denote the Haar measure of $\mathcal U(d)$. Define an inner product $\ll \cdot,\cdot\gg$ on $\mathcal H$ as 
\beqn
\ll x,y\gg\ := \int_{\mathcal U(d)} \inp{\pi(u) x}{\pi(u) y} du, \quad x, y \in \mathcal H.
\eeqn
Let $\|\cdot\|_1$ denote the norm induced by $\ll \cdot,\cdot\gg$. Then it is easy to see that 
\beqn
\|x\|_1 \leqslant c \|x\| \ \text{ and }\ \|x\| \leqslant c \|x\|_1 \ \text{ for all }\ x \in \mathcal H.
\eeqn
Thus, $(\mathcal H, \|\cdot\|_1)$ is a Hilbert space. Routine verification shows that $\pi(u)$ is a unitary on $(\mathcal H, \|\cdot\|_1)$ for all $u \in \mathcal U(d)$. It follows from the Riesz representation theorem that $(\mathcal H, \|\cdot\|_1)$ is isometric to $\mathcal H$. Let $S : (\mathcal H, \|\cdot\|_1) \to \mathcal H$ be a unitary map. Then $\rho(u) := S \pi(u) S^{-1}$, $u \in \mathcal U(d)$, defines a strongly continuous unitary representation of $\mathcal U(d)$. Viewing $S$ as an operator on $\mathcal H$, we obtain that $S$ is a bounded invertible operator on $\mathcal H$ and $\pi(u) = S^{-1} \rho(u) S$ for all $u \in \mathcal U(d)$. Note that 
\beqn
\rho(u) S T_j S^{-1} = S \pi(u) T_j S^{-1} = S (u \cdot T)_j \pi(u) S^{-1} 
= S (u \cdot T)_j S^{-1} \rho(u).
\eeqn
The above equality shows that the $d$-tuple $S T S^{-1} := (S T_1 S^{-1}, \ldots, S T_d S^{-1})$ is $\mathcal U(d)$-homogeneous. This completes the proof.  
\end{proof}

\begin{remark}
The conclusion of the preceding proposition also follows from a more general fact, namely, Dixmier's theorem \cite{D}. In fact, since $\mathcal U(d)$ is amenable (see \cite[Page 6]{P}), by Dixmier's theorem \cite{D} there exists a bounded invertible operator $S$ on $\mathcal H$ such that $\pi(u) = S^{-1} \rho(u) S$ for all $u \in \mathcal U(d)$, where $\rho : \mathcal U(d) \to \mathcal B(\mathcal H)$ is a strongly continuous unitary representation. Consequently, $S T S^{-1}$ is $\mathcal U(d)$-homogeneous.
\end{remark}

Let $p(z) = \sum_{|\alpha| \leqslant n} a_\alpha z^\alpha$, $a_\alpha \in \mathbb C$, $n \in \mathbb N$, be a polynomial in the variable $z = (z_1, \ldots, z_d)$. We say that $p$ is a homogeneous polynomial of degree $n$ if $p(tz) = t^n p(z)$, where $tz = (tz_1, \ldots, tz_d)$, $t \in \mathbb R$. Let $\text{Hom}(n)$ denote the vector space of all homogeneous polynomials of degree $n$. Let $\mathscr H(\kappa)$ be the reproducing kernel Hilbert space of holomorphic functions on the open unit ball $\mathbb B_d$ of $\mathbb C^d$ such that $\mathscr H(\kappa) = \bigoplus_{n\in \mathbb N} \mathrm{Hom}(n)$. For $u \in \mathcal U(d)$, we define the linear transformation $C_u$ in $\mathscr H(\kappa)$ as 
\beq\label{Cu-eq}
C_u f (z) = f(u \cdot z),\quad f \in \mathcal D(C_u).
\eeq
Note that
\beqn\label{Cu-Hom-n}
C_u(\text{Hom}(n)) = \text{Hom}(n), \quad n \in \mathbb N.
\eeqn
Thus, $C_u$ is a densely defined linear map which may not be bounded on $\mathscr H(\kappa)$. If $C_u$ is bounded on $\mathscr H(\kappa)$, then we refer to $C_u$ as {\it composition operator} induced by $u$.

We now show that the sufficient condition obtained in the Proposition \ref{gen-thm} is automatically satisfied by weakly $\mathcal U(d)$-homogeneous $\mathscr M_z$ on $\mathscr H(\kappa)$ which admit the orthogonal decomposition $\mathscr H(\kappa) = \bigoplus_{n\in \mathbb N} \mathrm{Hom}(n)$. We proceed with the following elementary lemma whose proof is included for the sake of completeness.

\begin{lemma}\label{Cu-strongly-rkhs}
Let $\mathscr H(\kappa)$ be a reproducing kernel Hilbert space of complex-valued holomorphic functions defined on the open unit ball $\mathbb B_d$ of $\mathbb C^d$. Suppose that $\mathscr H(\kappa) = \bigoplus_{n\in \mathbb N} \mathrm{Hom}(n)$. If $C_u$ is a bounded operator on $\mathscr H(\kappa)$ for each $u \in \mathcal U(d)$, then the map $\pi : \mathcal U(d) \to \mathcal B\big(\mathscr H(\kappa)) \big)$ given by $\pi(u) = C_{u^{-1}}$, $u \in \mathcal U(d)$, is a strongly continuous representation of $\mathcal U(d)$ .
\end{lemma}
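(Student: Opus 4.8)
The plan is to check that $\pi$ is a group homomorphism into the invertible elements of $\mathcal B(\mathscr H(\kappa))$, and then to get strong continuity from two ingredients: a uniform bound $\sup_{u\in\mathcal U(d)}\|\pi(u)\|<\infty$, together with continuity of $u\mapsto\pi(u)p$ on the dense subspace of polynomials. Once both are available, a standard $\varepsilon/3$ argument promotes continuity on the dense set to strong continuity on all of $\mathscr H(\kappa)$.

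First I would record the composition law: for a polynomial $f$ one has $C_uC_vf(z)=(C_vf)(u\cdot z)=f(v\cdot u\cdot z)=C_{vu}f(z)$, and since each $C_u$ leaves every $\mathrm{Hom}(n)$ invariant, whenever all $C_u$ are bounded this identity of bounded operators agreeing on polynomials gives $C_uC_v=C_{vu}$ on $\mathscr H(\kappa)$. Hence $\pi(u)\pi(v)=C_{u^{-1}}C_{v^{-1}}=C_{(uv)^{-1}}=\pi(uv)$ and $\pi(e)=I$, so each $\pi(u)$ is invertible with inverse $\pi(u^{-1})$ and $\pi$ is a representation. For the dense-set continuity, fix $f\in\mathrm{Hom}(n)$: expanding $f(u^{-1}\cdot z)$ in the monomial basis of $\mathrm{Hom}(n)$ exhibits the coordinates of $\pi(u)f$ as polynomials in the entries of $u$ and their conjugates, hence continuous in $u$; since $\mathrm{Hom}(n)$ is finite-dimensional, $u\mapsto\pi(u)f$ is $\mathscr H(\kappa)$-norm continuous, and by additivity so is $u\mapsto\pi(u)f$ for every $f$ in the algebraic sum $\bigoplus_n\mathrm{Hom}(n)$, which is dense because $\mathscr H(\kappa)=\bigoplus_n\mathrm{Hom}(n)$.

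The step I expect to be the real obstacle is the uniform bound, since a priori we only know that each individual $C_u$ is bounded. I would argue as follows. Because the decomposition $\mathscr H(\kappa)=\bigoplus_n\mathrm{Hom}(n)$ is orthogonal and $C_u$ preserves each summand, $\|C_u\|=\sup_n\|C_u|_{\mathrm{Hom}(n)}\|$, so the hypothesis says precisely that the functions $g_n\colon u\mapsto\|C_u|_{\mathrm{Hom}(n)}\|$ on $\mathcal U(d)$ — each continuous, by the polynomial-entries reasoning above — satisfy $\sup_n g_n(u)<\infty$ for every $u$. Since $\mathcal U(d)$ is a compact (hence Baire) metric space, applying the Baire category theorem to the closed sets $\bigl\{u:g_n(u)\le k\text{ for all }n\bigr\}$, $k\in\mathbb N$, produces a nonempty open set $U\subseteq\mathcal U(d)$ and a constant $M$ with $\|C_v\|\le M$ for all $v\in U$. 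Covering the compact group $\mathcal U(d)$ by finitely many left translates $g_1U,\dots,g_mU$ and using $C_{g_iv}=C_vC_{g_i}$, one gets $\|C_u\|\le M\max_{1\le i\le m}\|C_{g_i}\|$ for all $u$, hence $c:=\sup_{u}\|\pi(u)\|=\sup_{u}\|C_{u^{-1}}\|<\infty$.

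Finally I would assemble the pieces. Given $f\in\mathscr H(\kappa)$ and $\varepsilon>0$, choose a polynomial $p$ with $\|f-p\|<\varepsilon/(3c)$ (taking $c\ge1$); then for $u$ in a neighbourhood of $u_0$,
\[
\|\pi(u)f-\pi(u_0)f\|\le\|\pi(u)(f-p)\|+\|\pi(u)p-\pi(u_0)p\|+\|\pi(u_0)(p-f)\|,
\]
where the first and third terms are each $<\varepsilon/3$ by the uniform bound and the middle term is $<\varepsilon/3$ by continuity on polynomials. This proves $u\mapsto\pi(u)f$ is continuous, completing the proof. (As the paper's remark on Dixmier's theorem hints, once the uniform bound is in hand the strong continuity could also be obtained from general facts about bounded representations of compact groups, but the $\varepsilon/3$ route keeps the lemma self-contained.)
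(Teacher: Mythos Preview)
Your argument is correct. The overall skeleton --- check the homomorphism law, get continuity of $u\mapsto\pi(u)p$ on the dense polynomial subspace from the finite-dimensionality of each $\mathrm{Hom}(n)$, then pass to all of $\mathscr H(\kappa)$ --- is the same as the paper's, but the passage to general $f$ is handled differently. The paper writes $f=\sum_n p_n$, splits $\|C_{u^{-1}}f-C_{v^{-1}}f\|^2$ into $\sum_{n\le N}+\sum_{n>N}$, and asserts that the tail is $<\varepsilon$ for a suitable $N$ because $C_{u^{-1}}$ and $C_{v^{-1}}$ are bounded; it does not spell out why $N$ may be taken independent of $u$ in a neighbourhood of $v$. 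You instead isolate this as the essential point and resolve it in advance by a Baire-category argument: the continuous functions $u\mapsto\|C_u|_{\mathrm{Hom}(n)}\|$ are pointwise bounded on the compact (hence Baire) space $\mathcal U(d)$, so they are uniformly bounded on some nonempty open set, and compactness together with $C_{gv}=C_vC_g$ upgrades this to $\sup_{u}\|C_u\|<\infty$; the $\varepsilon/3$ estimate is then routine. Your route is a bit longer but makes the uniform tail control explicit and self-contained, whereas the paper's terser argument becomes airtight precisely once one supplies the (local) uniform bound that your Baire step delivers.
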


\begin{proof}
That $\pi$ is a representation is a routine verification. Fix $f \in \mathscr H(\kappa)$. Then 
\[f(z) = \sum_{n \in \mathbb N} p_n(z),\quad p_n \in \text{Hom}(n).\]
Let $\varepsilon > 0$. Since $C_{u^{-1}}$, $C_{v^{-1}}$ are bounded operators on $\mathscr H(\kappa)$ for $u,v \in \mathcal U(d)$, and $C_u(\text{Hom}(n)) = \text{Hom}(n)$ for all $n \in \mathbb N$, it follows that there exists a positive integer $N$ such that
\beqn
\|C_{u^{-1}} f - C_{v^{-1}} f\|^2_{\mathscr H(\kappa)} &=& \sum_{n=0}^N \|p_n(u^{-1} \cdot z) - p_n(v^{-1} \cdot z)\|^2_{\mathscr H(\kappa)}\\ 
&+& \sum_{n = N+1}^\infty \|p_n(u^{-1} \cdot z) - p_n(v^{-1} \cdot z)\|^2_{\mathscr H(\kappa)}\\
&<& \sum_{n=0}^N \|p_n(u^{-1} \cdot z) - p_n(v^{-1} \cdot z)\|^2_{\mathscr H(\kappa)} + \varepsilon. 
\eeqn
Note that $p(z) := \sum_{n=0}^N p_n(z)$ is a polynomial and $u \mapsto u^{-1} \cdot z$ is a continuous function from $\mathcal U(d)$ to $\mathbb C^d$. Therefore, there exists a $\delta > 0$ such that $\|p(u^{-1} \cdot z) - p(v^{-1} \cdot z)\|^2_{\mathscr H(\kappa)} < \varepsilon$ whenever $\|u-v\|<\delta$. Thus, from above, we get
\[\|C_{u^{-1}} f - C_{v^{-1}} f\|_{\mathscr H(\kappa)} < \sqrt{2\varepsilon}\ \text{ whenever }\ \|u-v\|<\delta.\]
This completes the proof.
\end{proof}

Here is the main result of this section.

\begin{theorem}\label{rkhs-Ud-homo}
Let $\mathscr H(\kappa)$ be a reproducing kernel Hilbert space of complex-valued holomorphic functions defined on the open unit ball $\mathbb B_d$ of $\mathbb C^d$. Suppose that $\mathscr H(\kappa) = \bigoplus_{n\in \mathbb N} \mathrm{Hom}(n)$. Then $\mathscr M_z$ on $\mathscr H(\kappa)$ is weakly $\mathcal U(d)$-homogeneous if and only if $\mathscr M_z$ is similar to a $\mathcal U(d)$-homogeneous tuple.
\end{theorem}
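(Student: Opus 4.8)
The direction "similar to a $\mathcal U(d)$-homogeneous tuple $\Rightarrow$ weakly $\mathcal U(d)$-homogeneous" is immediate and already noted in the introduction, so the content is the forward direction. Assume $\mathscr M_z$ on $\mathscr H(\kappa)$ is weakly $\mathcal U(d)$-homogeneous; by Proposition \ref{gen-thm} it suffices to produce, for each $u\in\mathcal U(d)$, an intertwiner $\Gamma(u)$ for which $u\mapsto\Gamma(u)$ is a strongly continuous representation. By Lemma \ref{Cu-strongly-rkhs}, the natural candidate is $\Gamma(u)=C_{u^{-1}}$, so the whole proof reduces to one assertion: \emph{weak $\mathcal U(d)$-homogeneity of $\mathscr M_z$ on $\mathscr H(\kappa)$ forces $C_u$ to be bounded on $\mathscr H(\kappa)$ for every $u\in\mathcal U(d)$.}

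**Key steps.** First I would record the intertwining relation in the form it actually takes. If $\Gamma(u)$ is a bounded invertible operator with $\Gamma(u)\mathscr M_{z_j}=(u\cdot\mathscr M_z)_j\,\Gamma(u)$ for all $j$, note that $(u\cdot\mathscr M_z)_j=\sum_k u_{jk}\mathscr M_{z_k}=\mathscr M_{(u\cdot z)_j}$, i.e.\ $u\cdot\mathscr M_z$ is the tuple of multiplication operators by the components of $u\cdot z$. Hence $\Gamma(u)$ intertwines multiplication by $p(z)$ with multiplication by $p(u\cdot z)$ for every polynomial $p$, and therefore $\Gamma(u)(p\cdot g)=p(u\cdot z)\,\Gamma(u)g$ for $p$ polynomial, $g\in\mathscr H(\kappa)$. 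Second, I would identify $\Gamma(u)$ with $C_{u}$ up to a scalar on each homogeneous summand: apply the relation to $g=1$ (the constant function, which lies in $\mathrm{Hom}(0)\subseteq\mathscr H(\kappa)$) to get $\Gamma(u)p=p(u\cdot z)\,\Gamma(u)1$; writing $\Gamma(u)1=\sum_n q_n$ with $q_n\in\mathrm{Hom}(n)$, and using that $\Gamma(u)$ is invertible, one argues $\Gamma(u)1$ is a nonzero constant $c(u)$ (e.g.\ since $\mathscr M_z$ has no nonzero vector annihilated by all $\mathscr M_{z_j}^*$-type reasoning is not needed — instead use that $\Gamma(u)$ maps the joint eigenspace structure appropriately; more directly, invertibility of $\Gamma(u)$ together with $\Gamma(u)p=p(u\cdot z)\Gamma(u)1$ for all $p$ shows $\mathrm{ran}\,\Gamma(u)\supseteq \{p(u\cdot z)\Gamma(u)1\}$, and comparing with the dense span of polynomials forces $\Gamma(u)1$ to be a nonzero scalar). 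Then $\Gamma(u)p=c(u)\,p(u\cdot z)=c(u)\,C_u p$ on polynomials, and since polynomials are dense and $\Gamma(u)$ is bounded, $C_u=c(u)^{-1}\Gamma(u)$ extends to a bounded operator on $\mathscr H(\kappa)$. Third, with $C_u$ bounded for all $u$, Lemma \ref{Cu-strongly-rkhs} gives that $\pi(u):=C_{u^{-1}}$ is a strongly continuous representation, and $\pi(u)$ intertwines $\mathscr M_z$ with $u\cdot\mathscr M_z$ (since $C_{u^{-1}}\mathscr M_{z_j}C_{u^{-1}}^{-1}=C_{u^{-1}}\mathscr M_{z_j}C_{u}=\mathscr M_{(u\cdot z)_j}$, a direct computation). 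Proposition \ref{gen-thm} then yields that $\mathscr M_z$ is similar to a $\mathcal U(d)$-homogeneous tuple.

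**Main obstacle.** The crux is the second step: showing $\Gamma(u)$ must coincide with $C_u$ up to scalar, i.e.\ that $\Gamma(u)1$ is a nonzero constant function. This is where one uses the special structure $\mathscr H(\kappa)=\bigoplus_n\mathrm{Hom}(n)$. I would argue as follows: from $\Gamma(u)p = p(u\cdot z)\,\Gamma(u)1$ for all polynomials $p$, and invertibility, the set $\{p(u\cdot z)\,\Gamma(u)1 : p \text{ polynomial}\}$ must be dense (it is $\Gamma(u)$ of a dense set). If $\Gamma(u)1 = \sum_{n\ge m} q_n$ with $q_m\neq 0$, $m\ge 1$, then every element of this set has its lowest-degree homogeneous component of degree $\ge m$ (multiplication by $p(u\cdot z)$ cannot lower degree), contradicting density since constants are not approximable by functions whose lowest-degree part has degree $\ge m\ge 1$ in the graded Hilbert space. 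Hence $m=0$, so $\Gamma(u)1$ has a nonzero constant term; a similar degree-counting argument (or: apply the relation to recover that $\Gamma(u)$ respects the grading, forcing $\Gamma(u)1\in\mathrm{Hom}(0)$) pins $\Gamma(u)1$ down to a scalar. The remaining verifications — that $c(u)\neq 0$, that $C_u=c(u)^{-1}\Gamma(u)$ is genuinely the composition operator of \eqref{Cu-eq}, and the routine intertwining computation for $\pi(u)$ — are straightforward given the graded structure and the density of polynomials.
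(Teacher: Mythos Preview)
Your overall strategy matches the paper's: reduce to showing that $C_u$ is bounded on $\mathscr H(\kappa)$ for every $u\in\mathcal U(d)$, then invoke Lemma~\ref{Cu-strongly-rkhs} and Proposition~\ref{gen-thm}. The paper, however, obtains the boundedness of $C_u$ by a different route: it uses the unitary $T_{u^{-1}}:\mathscr H(\kappa)\to\mathscr H(\kappa_{u^{-1}})$ to convert the given intertwiner $\Gamma(u)$ into a similarity $X_u=T_{u^{-1}}\Gamma(u)$ between $\mathscr M_z$ on $\mathscr H(\kappa)$ and on $\mathscr H(\kappa_{u^{-1}})$, and then appeals to a kernel-domination criterion (\cite[Theorem~5.2]{GKP} and \cite[Theorem~5.10]{PR}) to conclude that $C_{u^{-1}}$ is bounded.

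Your direct argument has a genuine gap exactly at the point you flagged. The claim that $\Gamma(u)1$ is a nonzero \emph{constant} cannot be drawn: if $C_u$ were bounded and $\psi$ is any invertible multiplier of $\mathscr H(\kappa)$ with $\psi(0)\ne 0$, then $\mathscr M_\psi C_u$ is an equally valid choice of intertwiner, and for it $\Gamma(u)1=\psi$ is non-constant. Neither a degree argument ``from above'' nor the assertion that $\Gamma(u)$ respects the grading survives this example (indeed the paper's Lemma~\ref{lem-Ud} in the $\mathcal H^2(\beta)$ setting shows the general form is $\Gamma(u)=\mathscr M_\phi C_u$). What your density argument does establish correctly is only that the constant term $c(u)$ of $\Gamma(u)1$ is nonzero. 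That is in fact enough, but one must argue differently: for $p\in\mathrm{Hom}(n)$ the identity $\Gamma(u)p=p(u\cdot z)\,\Gamma(u)1$ shows that the $\mathrm{Hom}(n)$-component of $\Gamma(u)p$ equals $c(u)\,p(u\cdot z)=c(u)\,C_u p$, so the diagonal block $P_n\Gamma(u)|_{\mathrm{Hom}(n)}$ is $c(u)\,C_u|_{\mathrm{Hom}(n)}$ and hence $\|C_u|_{\mathrm{Hom}(n)}\|\le |c(u)|^{-1}\|\Gamma(u)\|$ uniformly in $n$. Since $C_u$ preserves each $\mathrm{Hom}(n)$ and the decomposition is orthogonal, this gives $\|C_u\|\le |c(u)|^{-1}\|\Gamma(u)\|<\infty$. (This is essentially the averaging trick the paper uses later in Proposition~\ref{prop-2.13}, rephrased block-diagonally.) With this correction your route goes through and provides a self-contained alternative to the paper's kernel-comparison argument.
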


\begin{proof}
For $u \in \mathcal U(d)$, define
\beqn\label{Ku-inverse}
\kappa_{u}(z, w) := \kappa(u\cdot z, u \cdot w), \quad z, w \in \mathbb B_d.
\eeqn
Then it is easy to see that for all $u \in \mathcal U(d)$, $\mathscr H(\kappa_u)$ is also a reproducing kernel Hilbert space of complex-valued holomorphic functions $\mathbb B_d$ with $\mathscr H(\kappa_u) = \bigoplus_{n\in \mathbb N} \mathrm{Hom}(n)$. Note that the map $T_{u^{-1}} : \mathscr H(\kappa) \rar \mathscr H(\kappa_{u^{-1}})$, defined as $T_{u^{-1}} f(z) = f(u^{-1} \cdot z)$ for all $f \in \mathscr H(\kappa)$, is a unitary, and 
\beqn
	(T_{u^{-1}} (u \cdot \mathscr M_z)_j f)(z) = (T_{u^{-1}} (u \cdot z)_j f)(z) = z_j f(u^{-1} \cdot z) =  (\mathscr M_{z_j} T_{u^{-1}} f) (z).
	\eeqn
Now suppose that $\mathscr M_z$ on $\mathscr H(\kappa)$ is weakly $\mathcal U(d)$-homogeneous and for $u \in \mathcal U(d)$, let $\Gamma(u)$ be a bounded invertible operator on $\mathscr H(\kappa)$ such that
\beqn
\Gamma(u)\mathscr M_{z_j} = (u \cdot \mathscr M_z)_j \Gamma(u)\ \text{ for all }\ j=1, \ldots, d.
\eeqn
Then the map $X_u := T_{u^{-1}}\Gamma(u) : \mathscr H(\kappa) \to  \mathscr H(\kappa_{u^{-1}})$ is a bounded invertible operator which intertwines $\mathscr M_z$ on $\mathscr H(\kappa)$ and on $\mathscr H(\kappa_{u^{-1}})$. Therefore, by \cite[Theorem 5.2]{GKP}, there exist positive constants $C_1(u)$ and $C_2(u)$ such that
\beqn
C_1(u) \kappa(z,w) \leqslant \kappa_{u^{-1}}(z,w) \leqslant C_2(u) \kappa(z,w), \quad u \in \mathcal U(d) \text{ and } z,w \in \mathbb B_d.
\eeqn 
It now follows from \cite[Theorem 5.10]{PR} that $C_{u^{-1}} : \mathscr H(\kappa) \to \mathscr H(\kappa)$ is a bounded invertible operator. It is easy to see that 
\beqn
C_{u^{-1}} \mathscr M_{z_j} = (u^{-1} \cdot \mathscr M_z)_j C_{u^{-1}}\ \text{ for all }\ j \in \{1, \ldots, d\}.
\eeqn
Thus, by Lemma \ref{Cu-strongly-rkhs} and Proposition	\ref{gen-thm}, we conclude that $\mathscr M_z$ is similar to a $\mathcal U(d)$-homogeneous tuple. The converse is obvious.
\end{proof}

\section{Weakly $\mathcal U(d)$-homogeneous multishifts}

In this section, we show that the sufficient condition obtained in the Proposition \ref{gen-thm} is satisfied by the weakly $\mathcal U(d)$-homogeneous  (weighted) multishifts. The main result of this section states that a weakly $\mathcal U(d)$-homogeneous multishift is similar to a $\mathcal U(d)$-homogeneous multishift. It is well known that a multishift on $\ell^2(\mathbb N^d)$ can be looked upon as $\mathscr M_z$ on a Hilbert space $\mathcal H^2(\beta)$ of formal power series \cite{JL}. In view of this equivalence, we work in $\mathcal H^2(\beta)$ setup. The reader is referred to \cite{GKT, GKT1, GKT2} for a detailed study of multishifts.

Recall that for a multisequence $\beta = \{\beta_\alpha : \alpha \in \mathbb N^d\}$ of positive real  numbers, the Hilbert space $\mathcal H^2(\beta)$ of formal power series is defined as 
\beqn
\mathcal H^2(\beta) := \bigg\{ \sum_{\alpha \in \mathbb N^d} a_\alpha z^\alpha : \sum_{\alpha \in \mathbb N^d} |a_\alpha|^2 \beta_{\alpha}^2 < \infty,\ a_\alpha \in \mathbb C\bigg\}
\eeqn
equipped with the following inner product: 
\beqn
\inp{f}{g} := \sum_{\alpha \in \mathbb N^d} a_\alpha \overline{b}_\alpha \beta_{\alpha}^2, \quad f(z) = \sum_{\alpha \in \mathbb N^d} a_\alpha z^\alpha,\ g(z) = \sum_{\alpha \in \mathbb N^d} b_\alpha z^\alpha \in \mathcal H^2(\beta).
\eeqn

For a unitary matrix $u=(u_{ij})_{i,j=1}^d$, we define 
\beqn\label{H2ubeta}
\mathcal H_u^2(\beta) := \bigg\{ \sum_{\alpha \in \mathbb N^d} a_\alpha (u\cdot z)^\alpha : \sum_{\alpha \in \mathbb N^d} |a_\alpha|^2 \beta_{\alpha}^2 < \infty,\ a_\alpha \in \mathbb C\bigg\},
\eeqn
where for $z=(z_1, \ldots, z_d) \in \mathbb C^d$, 
$$(u\cdot z) := \Big(\sum_{k=1}^d u_{1k} z_k, \ldots,  \sum_{k=1}^d u_{dk} z_k\Big).$$ 
It is easily seen that $\mathcal H_u^2(\beta)$ is also a Hilbert space of formal power series in $d$-variables $z_1, \ldots, z_d$. Moreover, $\mathcal H_u^2(\beta)$ is same as $\mathcal H^2(\beta)$ as a set. Further, for $\mathscr M_z = (\mathscr M_{z_1}, \ldots, \mathscr M_{z_d})$, we set
$$(u\cdot \mathscr M_z) := \Big(\sum_{k=1}^d u_{1k} \mathscr M_{z_k}, \ldots,  \sum_{k=1}^d u_{dk} \mathscr M_{z_k}\Big).$$

The following lemma describes the intertwiner of $\mathscr M_z$ on $\mathcal H^2(\beta)$ and on $\mathcal H_u^2(\beta)$.

\begin{lemma}\label{multiplier}
Let $u \in \mathcal U(d)$. Suppose that $\mathscr M_z$ is bounded on $\mathcal H^2(\beta)$ and on $\mathcal H_u^2(\beta)$. If $X : \mathcal H^2(\beta) \to \mathcal H_u^2(\beta)$ is a bounded linear map such that $X \mathscr M_{z_j} = \mathscr M_{z_j} X$, $1 \leqslant j \leqslant d$, then $X = \mathscr M_\phi$, where 
$$\phi(z) = \sum_{\alpha \in \mathbb N^d} a_\alpha (u\cdot z)^\alpha,\quad a_\alpha \in \mathbb C.$$ 
Further, if $X$ is invertible, then $a_0$ is non-zero.
\end{lemma}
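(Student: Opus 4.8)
The plan is to follow the classical argument that an operator commuting with all the coordinate multiplications on a space of formal power series is itself a multiplication operator, the only twist being that the target space is $\mathcal H_u^2(\beta)$ rather than $\mathcal H^2(\beta)$.

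First I would put $\phi := X(1)$; this makes sense because $1 = z^0 \in \mathcal H^2(\beta)$ (as $\beta_0$ is a positive real number), and since $\phi \in \mathcal H_u^2(\beta)$, the very definition of $\mathcal H_u^2(\beta)$ forces $\phi(z) = \sum_{\alpha \in \mathbb N^d} a_\alpha (u\cdot z)^\alpha$ for suitable scalars $a_\alpha$ with $\sum_\alpha |a_\alpha|^2 \beta_\alpha^2 < \infty$. Next, since every monomial $z^\alpha$ lies in $\mathcal H^2(\beta)$ and $z^\alpha = \mathscr M_{z_1}^{\alpha_1}\cdots \mathscr M_{z_d}^{\alpha_d}\,1$, the intertwining relation $X\mathscr M_{z_j} = \mathscr M_{z_j} X$ (valid for the coordinate multiplications on either space) propagates to $X(z^\alpha) = z^\alpha \phi$ for every $\alpha \in \mathbb N^d$, and hence by linearity $X(p) = p\,\phi$ for every polynomial $p$.

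The substantive step is to upgrade this from polynomials to all of $\mathcal H^2(\beta)$. Because $\{z^\alpha\}_{\alpha}$ is an orthogonal basis of $\mathcal H^2(\beta)$, the polynomial partial sums $p_N$ of a given $f = \sum_\alpha a_\alpha z^\alpha \in \mathcal H^2(\beta)$ converge to $f$ in norm, whence $X p_N = p_N\phi \to Xf$ in $\mathcal H_u^2(\beta)$ by boundedness of $X$. To read off $Xf = \phi f$ as a formal power series I need that norm convergence in $\mathcal H_u^2(\beta)$ entails convergence of each formal-power-series coefficient; this is where a short verification is required. It follows from the homogeneous grading: each $(u\cdot z)^\alpha$ is homogeneous of degree $|\alpha|$, the functional sending $g = \sum_\alpha a_\alpha (u\cdot z)^\alpha$ to $a_\alpha$ is continuous on $\mathcal H_u^2(\beta)$ since $\|g\|^2 = \sum_\alpha |a_\alpha|^2\beta_\alpha^2 \geqslant |a_\alpha|^2 \beta_\alpha^2$, and hence the coefficient of $z^\gamma$ in an element of $\mathcal H_u^2(\beta)$ is a finite linear combination of the $a_\alpha$ with $|\alpha| = |\gamma|$, so it is a continuous functional. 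Comparing the coefficient of $z^\gamma$ on the two sides and noting that for $N \geqslant |\gamma|$ the coefficient of $z^\gamma$ in $p_N\phi$ has already stabilized to the coefficient of $z^\gamma$ in $f\phi$, one obtains $Xf = \phi f$; that is, $X = \mathscr M_\phi$.

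For the last assertion, observe that the coefficient of $z^0$ in $\phi$ is exactly $a_0$, since among the $(u\cdot z)^\alpha$ only $(u\cdot z)^0 = 1$ contributes a constant term. Hence the coefficient of $z^0$ in $\mathscr M_\phi f = \phi f$ equals $a_0$ times the coefficient of $z^0$ in $f$. If $a_0 = 0$, then every element of $\operatorname{ran} X$ has vanishing constant term, so $1 \notin \operatorname{ran} X$ and $X$ cannot be invertible; this contradiction gives $a_0 \neq 0$. The one place that calls for care is the density step, namely turning the algebraic identity on polynomials plus norm convergence into the multiplication formula on all of $\mathcal H^2(\beta)$, which I would either justify by invoking the standing fact that the coefficient functionals on a Hilbert space of formal power series are bounded, or prove on the spot via the degree decomposition as indicated above.
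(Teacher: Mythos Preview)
Your argument is correct and follows the same core strategy as the paper: set $\phi = X(1)$, use the intertwining relation to get $X(z^\alpha) = z^\alpha\phi$, and extend by density. If anything, your treatment of the density step is more careful than the paper's, which simply writes $Xf = \sum_\alpha f(\alpha)X(z^\alpha) = \phi f$ without pausing to justify the last equality in $\mathcal H_u^2(\beta)$.

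The one genuine difference is the proof that $a_0 \neq 0$. The paper applies the first part symmetrically to $X^{-1}$, obtaining $X^{-1} = \mathscr M_\psi$ with $\psi = \sum_\alpha b_\alpha z^\alpha$, and then reads off $a_0 b_0 = 1$ from $P_{[1]}X^{-1}X|_{[1]} = I_{[1]}$. Your argument is more direct: if $a_0 = 0$ then the constant term of $\phi f$ vanishes for every $f$, so $1 \notin \operatorname{ran} X$ and $X$ is not surjective. Both are short; yours avoids re-invoking the multiplier description for the inverse, while the paper's version yields the slightly stronger information that $a_0^{-1}$ is the constant term of the multiplier for $X^{-1}$.
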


\begin{proof}
Suppose that $X : \mathcal H^2(\beta) \to \mathcal H_u^2(\beta)$ is a bounded linear map such that $X \mathscr M_{z_j} = \mathscr M_{z_j} X$ for all $j = 1, \ldots, d$. Let
\[X(1) = \sum_{\alpha \in \mathbb N^d} a_\alpha (u\cdot z)^\alpha \in \mathcal H_u^2(\beta).\]
Since $X$ intertwines $\mathscr M_{z_j}$ for all $j = 1, \ldots, d$, we get
\[X(z^\alpha) = X\mathscr M_z^\alpha (1) = \mathscr M_z^\alpha X(1) = z^\alpha X(1) \text{ for all } \alpha \in \mathbb N^d.\]
Let $f \in \mathcal H^2(\beta)$ and $f(z) = \sum_{\alpha \in \mathbb N^d} f(\alpha) z^\alpha$. Then 
\beqn
Xf(z) &=& \sum_{\alpha \in \mathbb N^d} X\big(f(\alpha) z^\alpha\big) = \sum_{\alpha \in \mathbb N^d} f(\alpha) X(z^\alpha) = \sum_{\alpha \in \mathbb N^d} f(\alpha) z^\alpha X(1)\\ 
&=& \phi(z) f(z),
\eeqn
where $\phi(z) = \sum_{\alpha \in \mathbb N^d} a_\alpha (u\cdot z)^\alpha$. 

Now suppose that $X$ is invertible. Then following the arguments of the preceding paragraph, we can show that $X^{-1} = \mathscr M_\psi$, where 
\[\psi(z) = \sum_{\alpha \in \mathbb N^d} b_\alpha z^\alpha,\quad b_\alpha \in \mathbb C.\] 
Let $[1]$ denote the subspace of $\mathcal H^2(\beta)$ spanned by the constant polynomial $1$. Then
\beqn
1 = I_{[1]}(1) = P_{[1]} X^{-1} X|_{[1]}(1) = P_{[1]} \mathscr M_\psi \mathscr M_\phi(1) = P_{[1]} \psi(z)\phi(z)= a_0 b_0.
\eeqn
Thus, $a_0$ is non-zero. This completes the proof.
\end{proof}

As in the previous section, for $u \in \mathcal U(d)$, we consider the linear transformation $C_u$ in $\mathcal H^2(\beta)$ given by \eqref{Cu-eq}, that is, 
\beqn
C_u f (z) = f(u \cdot z),\quad f \in \mathcal D(C_u).
\eeqn
Note that $C_u$ is a densely defined linear map which may not be bounded on $\mathcal H^2(\beta)$. If $C_u$ is bounded on $\mathcal H^2(\beta)$, then we refer to $C_u$ as composition operator induced by $u$. Later we show that $\mathscr M_z$ being weakly $\mathcal U(d)$-homogeneous is equivalent to $C_u$ being bounded on $\mathcal H^2(\beta)$ for all $u \in \mathcal U(d)$. The following lemma is needed to this end, which describes the intertwiner of $\mathscr M_z$ and $u \cdot \mathscr M_z$ on $\mathcal H^2(\beta)$.

\begin{lemma}\label{lem-Ud}
Suppose that $\mathscr M_z$ is bounded on $\mathcal H^2(\beta)$. For $u \in \mathcal U(d),$ let $\Gamma(u): \mathcal H^2(\beta) \to \mathcal H^2(\beta)$ be a bounded linear map such that $\Gamma(u)\mathscr M_{z_j} = (u \cdot \mathscr M_z)_j \Gamma(u)$ for all $j = 1, \ldots, d$. Then $\Gamma(u) = \mathscr M_\phi C_u$, where $\phi(z) = \sum_{\alpha \in \mathbb N^d} a_\alpha z^\alpha$, $a_\alpha \in \mathbb C$.
\end{lemma}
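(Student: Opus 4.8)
The plan is to exploit the same intertwining bootstrap used in Lemma~\ref{multiplier}, but now comparing $\mathscr M_z$ with $u\cdot\mathscr M_z$ on the \emph{same} space $\mathcal H^2(\beta)$. First I would observe that the composition map $C_u$ is the natural unitary-like device relating the two: since $C_u$ sends $z^\alpha$ to $(u\cdot z)^\alpha$ and $C_u(\mathrm{Hom}(n))=\mathrm{Hom}(n)$, a direct computation on monomials gives $C_u\,\mathscr M_{z_j}=(u^{-1}\cdot\mathscr M_z)_j\,C_u$ on the dense domain $\mathcal D(C_u)$; equivalently $C_{u^{-1}}\mathscr M_{z_j}=(u\cdot\mathscr M_z)_jC_{u^{-1}}$. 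The difficulty is that $C_u$ need not be bounded, so I cannot simply write $\Gamma(u)=\mathscr M_\phi C_u$ and be done — I must produce $\phi$ first and only afterwards identify $\Gamma(u)C_{u^{-1}}$ (formally $\mathscr M_\phi$) as a genuine multiplier, which then forces $C_u$ itself to be bounded wherever needed.

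Concretely, I would argue as follows. Set $Y:=\Gamma(u)C_{u^{-1}}$, a priori only a densely defined linear map on $\mathcal H^2(\beta)$ defined on $C_u(\mathcal H^2(\beta))\supseteq\mathbb C[z]$ (the polynomials lie in the range of $C_u$ since $C_u$ is a bijection of each $\mathrm{Hom}(n)$). Using $\Gamma(u)\mathscr M_{z_j}=(u\cdot\mathscr M_z)_j\Gamma(u)$ together with $C_{u^{-1}}\mathscr M_{z_j}=(u\cdot\mathscr M_z)_jC_{u^{-1}}$, one checks that $Y\mathscr M_{z_j}=\mathscr M_{z_j}Y$ on polynomials, for every $j$. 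Let $\phi(z):=Y(1)=\sum_\alpha a_\alpha z^\alpha\in\mathcal H^2(\beta)$; the commutation relations give $Y(z^\alpha)=z^\alpha\phi(z)$ for all $\alpha$, exactly as in the proof of Lemma~\ref{multiplier}, and hence $Y$ agrees with the (possibly unbounded) multiplication operator $\mathscr M_\phi$ on the polynomials. Therefore on the dense set $\mathbb C[z]$ we have the identity
\[
\Gamma(u)\,f(z)=Y\big(C_uf\big)(z)=\phi(z)\,(C_uf)(z)=\phi(z)\,f(u\cdot z)=\big(\mathscr M_\phi C_u f\big)(z),
\]
which is the asserted formula $\Gamma(u)=\mathscr M_\phi C_u$ on a core. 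Since $\Gamma(u)$ is bounded and polynomials are dense, this identity extends by continuity to all of $\mathcal H^2(\beta)$: for any $f\in\mathcal H^2(\beta)$ and polynomials $p_n\to f$, $\Gamma(u)p_n\to\Gamma(u)f$, and one reads off that $C_u p_n=\mathscr M_\phi^{-1}$-image of $\Gamma(u)p_n$ — more carefully, one shows $\|\mathscr M_\phi C_u p_n\|$ is Cauchy, and because $\phi=Y(1)\ne 0$ is a nonzero holomorphic formal power series the formal product $\phi\cdot f$ is well defined, so $\Gamma(u)f=\mathscr M_\phi C_u f$ as formal power series, giving both that $C_u$ extends to a bounded operator (namely $\mathscr M_{1/\phi}\Gamma(u)$, interpreted appropriately) and that $\Gamma(u)=\mathscr M_\phi C_u$ globally.

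The main obstacle is precisely this last bounded-extension step: a priori neither $\mathscr M_\phi$ nor $C_u$ is bounded, only their composition $\Gamma(u)$ is, so I must be careful to carry out the limiting argument entirely at the level of formal power series (where multiplication by the fixed element $\phi$ and composition with the fixed unitary $u$ are always well defined as formal operations) and only at the end invoke membership in $\mathcal H^2(\beta)$. Once the identity $\Gamma(u)=\mathscr M_\phi C_u$ is established on polynomials, extending it is a matter of matching coefficients: if $p_n\to f$ in $\mathcal H^2(\beta)$ then $p_n\to f$ coefficientwise, hence $C_u p_n\to C_u f$ and $\mathscr M_\phi C_u p_n\to \mathscr M_\phi C_u f$ coefficientwise, while $\Gamma(u)p_n\to\Gamma(u)f$ in norm and therefore coefficientwise; comparing the two limits yields $\Gamma(u)f=\mathscr M_\phi C_u f$. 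This also shows $C_u=\mathscr M_{\phi}^{-1}\Gamma(u)$ is bounded on $\mathcal H^2(\beta)$, which is the equivalence flagged in the text and sets up the subsequent application of Lemma~\ref{Cu-strongly-rkhs} and Proposition~\ref{gen-thm}. The nonvanishing of $a_0$ (needed to make $1/\phi$ a legitimate formal power series and $\mathscr M_\phi$ formally invertible) follows exactly as in Lemma~\ref{multiplier}, using that $\Gamma(u)$ is invertible with inverse again of multiplier type after composing with $C_{u^{-1}}$.
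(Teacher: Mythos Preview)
Your overall strategy is sound and does prove the lemma, but it differs from the paper's route and contains a sign slip that you should fix. You write $C_u\mathscr M_{z_j}=(u^{-1}\cdot\mathscr M_z)_jC_u$ and ``equivalently'' $C_{u^{-1}}\mathscr M_{z_j}=(u\cdot\mathscr M_z)_jC_{u^{-1}}$; the correct identities are $C_u\mathscr M_{z_j}=(u\cdot\mathscr M_z)_jC_u$ and hence $C_{u^{-1}}\mathscr M_{z_j}=(u^{-1}\cdot\mathscr M_z)_jC_{u^{-1}}$ (just compute $(C_u\mathscr M_{z_j}f)(z)=(u\cdot z)_jf(u\cdot z)$). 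With your stated relation the commutation $Y\mathscr M_{z_j}=\mathscr M_{z_j}Y$ actually fails; with the corrected one it goes through exactly as you intend, since $\Gamma(u)(u^{-1}\cdot\mathscr M_z)_j=\mathscr M_{z_j}\Gamma(u)$.

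The paper avoids your unbounded-operator gymnastics entirely by introducing the auxiliary space $\mathcal H^2_{u^{-1}}(\beta)$: the map $T_{u^{-1}}:\mathcal H^2(\beta)\to\mathcal H^2_{u^{-1}}(\beta)$, $f\mapsto f(u^{-1}\cdot\,)$, is a genuine unitary (same formal operation as $C_{u^{-1}}$, but with the target norm adjusted so that it is isometric), so $X_u:=T_{u^{-1}}\Gamma(u)$ is automatically bounded and intertwines $\mathscr M_z$ on the two spaces; Lemma~\ref{multiplier} then gives $X_u=\mathscr M_\psi$ with $\psi(z)=\sum_\alpha a_\alpha(u^{-1}\cdot z)^\alpha$, and applying $T_u$ yields $\Gamma(u)f(z)=\phi(z)f(u\cdot z)$ directly. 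Your argument instead keeps everything in $\mathcal H^2(\beta)$, defines $Y=\Gamma(u)C_{u^{-1}}$ only on polynomials, and must then extend $\Gamma(u)=\mathscr M_\phi C_u$ from polynomials to all of $\mathcal H^2(\beta)$ by a coefficientwise limit; that step is correct (each coefficient of $\phi\cdot(f\circ u)$ depends on only finitely many coefficients of $f$, so coefficientwise continuity holds), but it is extra work that the change of target space eliminates. Finally, your closing claim that ``$C_u=\mathscr M_\phi^{-1}\Gamma(u)$ is bounded'' overreaches: even when $a_0\neq 0$ so that $1/\phi$ is a formal power series, $\mathscr M_{1/\phi}$ need not be bounded, and the lemma does not assert boundedness of $C_u$---that is established separately in Proposition~\ref{prop-2.13} by an averaging trick over $\mathbb T^d$.
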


\begin{proof}
Fix $u \in \mathcal U(d)$. Then the map $T_{u^{-1}} : \mathcal H^2(\beta) \rar \mathcal H_{u^{-1}}^2(\beta)$, defined as $T_{u^{-1}} f(z) = f(u^{-1} \cdot z)$ for all $f \in \mathcal H^2(\beta)$, is a unitary. Further, for all $f \in \mathcal H^2(\beta)$ and $j \in \{1, \ldots, d\}$, we have
	\beqn
	(T_{u^{-1}} (u \cdot \mathscr M_z)_j f)(z) = (T_{u^{-1}} (u \cdot z)_j f)(z) = z_j f(u^{-1} \cdot z) =  (\mathscr M_{z_j} T_{u^{-1}} f) (z).
	\eeqn
Thus, $u \cdot \mathscr M_z$ on $\mathcal H^2(\beta)$ is unitarily equivalent to $\mathscr M_z$ on $\mathcal H_{u^{-1}}^2(\beta)$. Let $\Gamma(u): \mathcal H^2(\beta) \to \mathcal H^2(\beta)$ be a bounded invertible operator such that $\Gamma(u)\mathscr M_{z_j} = (u \cdot \mathscr M_z)_j\Gamma(u)$ for all $j = 1, \ldots, d$. This yields
	\beqn
	T_{u^{-1}}\Gamma(u) \mathscr M_{z_j} = T_{u^{-1}}(u \cdot \mathscr M_z)_j \Gamma(u) = \mathscr M_{z_j}	T_{u^{-1}}\Gamma(u), \quad j = 1, \ldots, d.
	\eeqn
	Note that $X_u := T_{u^{-1}}\Gamma(u) : \mathcal H^2(\beta) \to  \mathcal H_{u^{-1}}^2(\beta)$ is a bounded invertible operator which intertwines $\mathscr M_z$ on $\mathcal H^2(\beta)$ and on $\mathcal H_{u^{-1}}^2(\beta)$. Therefore, by Lemma \ref{multiplier}, $X_u f(z)=\sum_{\alpha \in \mathbb N^d} a_\alpha (u^{-1} \cdot z)^\alpha f(z),$ where $a_\alpha \in \mathbb C.$
	This shows that \beqn  \Gamma(u)f(z)=T_{u} \sum_{\alpha \in \mathbb N^d} a_\alpha (u^{-1} \cdot z)^\alpha f(z) =\phi(z)f(u \cdot z),  \eeqn where $\phi(z) = \sum_{\alpha \in \mathbb N^d} a_\alpha z^\alpha$, $a_\alpha \in \mathbb C$. Thus, $\Gamma(u) = \mathscr M_\phi C_u$.
\end{proof}

The following proposition shows that $\mathscr M_z$ being weakly $\mathcal U(d)$-homogeneous on $\mathcal H^2(\beta)$ is equivalent to the composition transformation $C_u$ being bounded on $\mathcal H^2(\beta)$ for all $u \in \mathcal U(d)$.
	
\begin{proposition}\label{prop-2.13}
Let $\mathscr M_z$ be bounded on $\mathcal H^2(\beta)$. Then $\mathscr M_z$ is weakly $\mathcal U(d)$-homogeneous if and only if $C_u$ is a bounded operator on $\mathcal H^2(\beta)$ for all $u \in \mathcal U(d)$. 
\end{proposition}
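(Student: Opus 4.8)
The plan is to prove both implications using the structural lemmas already established. For the forward direction, suppose $\mathscr M_z$ is weakly $\mathcal U(d)$-homogeneous. Fix $u \in \mathcal U(d)$ and let $\Gamma(u)$ be the corresponding bounded invertible intertwiner. By Lemma \ref{lem-Ud}, $\Gamma(u) = \mathscr M_\phi C_u$ for some formal power series $\phi(z) = \sum_\alpha a_\alpha z^\alpha$. Since $\Gamma(u)$ is invertible, I expect (as in Lemma \ref{multiplier}) that $\phi$ is a multiplier with $a_0 \neq 0$, and in fact $\mathscr M_\phi$ itself should be invertible on $\mathcal H^2(\beta)$: indeed, unwinding the proof of Lemma \ref{lem-Ud}, $\Gamma(u) = T_u X_u$ where $X_u = \mathscr M_\psi$ with $\psi(z) = \sum_\alpha a_\alpha (u^{-1}\cdot z)^\alpha$ is the multiplier furnished by Lemma \ref{multiplier} as a \emph{bounded invertible} map between $\mathcal H^2(\beta)$ and $\mathcal H_{u^{-1}}^2(\beta)$; pulling back along the unitary $T_u$ shows $\mathscr M_\phi : \mathcal H^2(\beta) \to \mathcal H^2(\beta)$ is bounded invertible. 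Then $C_u = \mathscr M_\phi^{-1} \Gamma(u)$ is a composition of bounded operators, hence bounded. This argument applies to every $u \in \mathcal U(d)$.

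For the converse, suppose $C_u$ is bounded on $\mathcal H^2(\beta)$ for every $u \in \mathcal U(d)$. First, $C_u$ is invertible with $C_u^{-1} = C_{u^{-1}}$, which is bounded by hypothesis; so $C_u$ is a bounded invertible operator. Next I must check the intertwining relation. A direct computation on monomials gives $C_u \mathscr M_{z_j} f(z) = C_u(z_j f(z)) = (u\cdot z)_j\, f(u\cdot z) = \big(\sum_k u_{jk} z_k\big) (C_u f)(z) = \big((u\cdot \mathscr M_z)_j C_u f\big)(z)$, so $C_u \mathscr M_{z_j} = (u\cdot \mathscr M_z)_j C_u$ for all $j$. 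Hence $\Gamma(u) := C_u$ serves as the required bounded invertible intertwiner, and $\mathscr M_z$ is weakly $\mathcal U(d)$-homogeneous. Here one should be slightly careful that $C_u$ genuinely maps $\mathcal H^2(\beta)$ into itself (not merely into $\mathcal H_u^2(\beta)$) — but this is exactly the content of the boundedness hypothesis on $C_u$ as stated, since $C_u$ is defined as a transformation in $\mathcal H^2(\beta)$ via \eqref{Cu-eq}.

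The main obstacle is the forward direction, specifically justifying that $\mathscr M_\phi$ is invertible (equivalently, that $\phi$ is a \emph{bounded} multiplier with bounded inverse multiplier) rather than merely that $\Gamma(u) = \mathscr M_\phi C_u$ as formal-power-series operators. The cleanest route is to avoid re-deriving this and instead read it off the proof of Lemma \ref{lem-Ud}: that proof produces $X_u = T_{u^{-1}}\Gamma(u)$ as a bounded invertible map $\mathcal H^2(\beta)\to\mathcal H_{u^{-1}}^2(\beta)$, Lemma \ref{multiplier} identifies it as multiplication by $\sum_\alpha a_\alpha (u^{-1}\cdot z)^\alpha$ with nonzero constant term, and since $X_u$ is invertible between these two Hilbert spaces, conjugating by the unitaries $T_{u^{-1}}$ (which carry $\mathcal H^2(\beta)$ to $\mathcal H_{u^{-1}}^2(\beta)$ isometrically) shows the induced multiplication operator on $\mathcal H^2(\beta)$ is bounded invertible. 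Then $C_u = \mathscr M_\phi^{-1}\Gamma(u)$ finishes it. A secondary, purely bookkeeping point is keeping the directions of $u$ versus $u^{-1}$ consistent throughout, since both $C_u$ and $C_{u^{-1}}$ appear; replacing $u$ by $u^{-1}$ freely (as $u$ ranges over all of $\mathcal U(d)$) resolves any apparent mismatch.
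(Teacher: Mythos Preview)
Your converse direction is correct and matches the paper's argument exactly. The forward direction, however, has a genuine gap: the claim that $\mathscr M_\phi$ is bounded invertible on $\mathcal H^2(\beta)$ is not justified by the conjugation you describe. You know that $X_u=\mathscr M_\psi:\mathcal H^2(\beta)\to\mathcal H_{u^{-1}}^2(\beta)$ is bounded invertible, and as formal maps $\mathscr M_\phi = T_u\,\mathscr M_\psi\,T_{u^{-1}}$. But for this to exhibit $\mathscr M_\phi$ as bounded on $\mathcal H^2(\beta)$ you would need $\mathscr M_\psi$ acting boundedly from $\mathcal H_{u^{-1}}^2(\beta)$ to $\mathcal H_{u^{-1}}^2(\beta)$ (since $T_{u^{-1}}:\mathcal H^2(\beta)\to\mathcal H_{u^{-1}}^2(\beta)$ lands there), and that is \emph{not} what Lemma~\ref{multiplier} gives. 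If instead you compose so that the domains match the known boundedness of $X_u$, you recover $T_u X_u=\Gamma(u)=\mathscr M_\phi C_u$, not $\mathscr M_\phi$; extracting $\mathscr M_\phi$ from this requires $C_u^{-1}$ bounded, which is circular. In short, the source/target mismatch between $\mathcal H^2(\beta)$ and $\mathcal H_{u^{-1}}^2(\beta)$ encodes exactly the unknown boundedness of $C_{u^{-1}}$.

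The paper avoids this by never trying to invert $\mathscr M_\phi$. Instead it uses the (always unitary) torus action $f\mapsto f_t$ on $\mathcal H^2(\beta)$ to average $\Gamma(u)$ and isolate the constant term $\phi(0)$: one obtains $\|\phi(0)\,C_u f\|\leqslant \|\Gamma(u)\|\,\|f\|$, and since $\phi(0)\neq 0$ by Lemma~\ref{multiplier} this bounds $C_u$ directly. An equivalent formulation, which you could use to repair your argument, is to observe that $C_u$ is block diagonal with respect to $\bigoplus_n\mathrm{Hom}(n)$ while multiplication by $\phi-\phi(0)$ strictly raises degree; hence $P_n\Gamma(u)|_{\mathrm{Hom}(n)}=\phi(0)\,C_u|_{\mathrm{Hom}(n)}$, giving $\|C_u|_{\mathrm{Hom}(n)}\|\leqslant \|\Gamma(u)\|/|\phi(0)|$ uniformly in $n$.
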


\begin{proof}
For $f \in \mathcal H^2(\beta)$ and $t \in \mathbb T^d$, define $f_t(z) := f(t\cdot z)$, where $t\cdot z := (t_1 z_1, \ldots, t_d z_d)$. Then it is easy to see that $f_t \in \mathcal H^2(\beta)$ and $\|f_t\|_{\mathcal H^2(\beta)} = \|f\|_{\mathcal H^2(\beta)}$ for all $f \in \mathcal H^2(\beta)$ and $t \in \mathbb T^d$. Suppose that $\mathscr M_z$ is weakly $\mathcal U(d)$-homogeneous. For $u \in \mathcal U(d)$, let $\Gamma(u) : \mathcal H^2(\beta) \to \mathcal H^2(\beta)$ be an invertible operator such that
\beqn
\Gamma(u)\mathscr M_{z_j} = (u \cdot \mathscr M_z)_j \Gamma(u), \quad j = 1, \ldots, d.
\eeqn
Then it follows from Lemma \ref{lem-Ud} that $\Gamma(u) = \mathscr M_\phi C_u$. Therefore, for $f \in \mathcal H^2(\beta)$ and $t \in \mathbb T^d$, we have
\beqn
\big(\Gamma(u) f_t\big)_{\bar t}(z) = \phi(\bar t \cdot z) f(u \cdot z).
\eeqn 
Thus, we get
\beqn
\int_{\mathbb T^2} \big(\Gamma(u) f_t\big)_{\bar t}(z)\, dm(t) = \phi(0) f(u \cdot z) \text{ for all }\ u \in \mathcal U(d), 
\eeqn
where $m(t)$ is the normalized Lebesgue measure on $\mathbb T^d$. This, in turn, yields that
\beqn
|\phi(0) |^2 \|f(u \cdot z)\|^2 &=& \left\| \int_{\mathbb T^d} \big(\Gamma(u) f_t\big)_{\bar t}(z)\, dm(t) \right\|^2 \leqslant \int_{\mathbb T^d} \|\big(\Gamma(u) f_t\big)_{\bar t}\|^2 dm(t)\\
&=& \int_{\mathbb T^d} \|\Gamma(u) f_t\|^2 dm(t) \leqslant \int_{\mathbb T^d} \|\Gamma(u)\|^2 \|f_t\|^2 dm(t)\\
&=& \|\Gamma(u)\|^2 \|f\|^2\ \text{ for all } f \in \mathcal H^2(\beta) \text{ and } u \in \mathcal U(d). 
\eeqn
Note that $\phi(0)$ is non-zero by Lemma \ref{multiplier}. Hence, we get
\beqn
\|f(u \cdot z)\| \leqslant \frac{\|\Gamma(u)\|}{|\phi(0)|}\, \|f\| \text{ for all } f \in \mathcal H^2(\beta) \text{ and } u \in \mathcal U(d).
\eeqn
Thus, $C_u$ is bounded on $\mathcal H^2(\beta)$ for all $u \in \mathcal U(d)$.

Conversely, suppose that $C_u$ is bounded on $\mathcal H^2(\beta)$ for all $u \in \mathcal U(d)$. Then it is easy to see that $C_u$ is a bounded invertible operator on $\mathcal H^2(\beta)$ for all $u \in \mathcal U(d)$. Observe that 
\beq\label{Cu-intertwine}
C_u \mathscr M_{z_j} f = C_u (z_j f) = (u\cdot z)_j f(u \cdot z) = (u \cdot \mathscr M_z)_j C_u f 
\eeq
for all $f \in \mathcal H^2(\beta)$, $j \in \{1, \ldots, d\}$, and $u \in \mathcal U(d)$. Thus, $\mathscr M_z$ is weakly $\mathcal U(d)$-homogeneous. This completes the proof.
\end{proof}

The proof of the following lemma is similar to that of Lemma \ref{Cu-strongly-rkhs}, and hence, it is stated here without proof.

\begin{lemma}\label{Cu-strongly}
Let $C_u$ be a bounded operator on $\mathcal H^2(\beta)$ for each $u \in \mathcal U(d)$. Then the map $\pi : \mathcal U(d) \to \mathcal B\big(\mathcal H^2(\beta)\big)$ given by $\pi(u) = C_{u^{-1}}$, $u \in \mathcal U(d)$, is a strongly continuous representation of $\mathcal U(d)$ into $\mathcal B\big(\mathcal H^2(\beta)\big)$.
\end{lemma}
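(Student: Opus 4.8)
The plan is to follow the proof of Lemma~\ref{Cu-strongly-rkhs} line by line, with $\mathscr H(\kappa)$ replaced by $\mathcal H^2(\beta)$; the one place where a little extra work is needed is a uniform bound on the operator norms $\|C_u\|$. First I would record the routine facts. From $C_uf(z)=f(u\cdot z)$ one reads off $C_aC_b=C_{ba}$ and $C_uC_{u^{-1}}=C_{u^{-1}}C_u=I$, so $\pi(u)\pi(v)=C_{u^{-1}}C_{v^{-1}}=C_{(uv)^{-1}}=\pi(uv)$, i.e.\ $\pi$ is a group homomorphism, and each $\pi(u)=C_{u^{-1}}$ is bounded by hypothesis with bounded inverse $C_u=\pi(u^{-1})$. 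Also, $\mathcal H^2(\beta)=\bigoplus_{n\in\mathbb N}\mathrm{Hom}(n)$ is an \emph{orthogonal} decomposition (distinct monomials are orthogonal in $\mathcal H^2(\beta)$) which every $C_u$ preserves, and for a polynomial $q$ of degree $\le N$ the map $u\mapsto C_{u^{-1}}q=q(u^{-1}\cdot z)$ is norm-continuous into the finite-dimensional space $\bigoplus_{n\le N}\mathrm{Hom}(n)$, because the coefficients of $q(u^{-1}\cdot z)$ are fixed polynomials in the entries of $u^{-1}$ while $u\mapsto u^{-1}$ is continuous on $\mathcal U(d)$.

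The step I expect to be the main obstacle is establishing $c:=\sup_{u\in\mathcal U(d)}\|C_u\|<\infty$ from the mere hypothesis that each $C_u$ is bounded — this is exactly what makes the tail estimate below uniform in $u$, and it is the point used implicitly in the proof of Lemma~\ref{Cu-strongly-rkhs}. I would argue by Baire category. By density of polynomials, $\|C_u\|=\sup\{\|C_up\|:p\text{ a polynomial},\ \|p\|\le1\}$ is a pointwise supremum of continuous functions of $u$, so $u\mapsto\|C_u\|$ is lower semicontinuous and the sets $E_m=\{u\in\mathcal U(d):\|C_u\|\le m\}$ are closed; they cover the compact (hence Baire) group $\mathcal U(d)$, so some $E_{m_0}$ has an interior point $u_1$. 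Using $C_wC_{u_1}=C_{u_1w}$ and $C_{u_1}^{-1}=C_{u_1^{-1}}$, one gets $\|C_w\|\le m_0\,\|C_{u_1^{-1}}\|$ for every $w$ in the open neighbourhood $u_1^{-1}\,\mathrm{int}(E_{m_0})$ of the identity, and covering $\mathcal U(d)$ by finitely many left translates of this neighbourhood then yields $c<\infty$. (Alternatively one could invoke Dixmier's theorem, as in the remark after Proposition~\ref{gen-thm}, or a measurability-implies-continuity theorem for representations; the Baire argument is self-contained.)

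With $c<\infty$ at hand, strong continuity follows as in Lemma~\ref{Cu-strongly-rkhs}. Fix $f\in\mathcal H^2(\beta)$, write $f=\sum_n p_n$ with $p_n\in\mathrm{Hom}(n)$, and given $\varepsilon>0$ pick $N$ with $\sum_{n>N}\|p_n\|^2<\varepsilon^2/(16c^2)$. For all $u,v\in\mathcal U(d)$, orthogonality of the grading gives
\[
\|C_{u^{-1}}f-C_{v^{-1}}f\|^2=\sum_{n\le N}\|C_{u^{-1}}p_n-C_{v^{-1}}p_n\|^2+\sum_{n>N}\|C_{u^{-1}}p_n-C_{v^{-1}}p_n\|^2 .
\]
The tail is at most $2\big(\|C_{u^{-1}}(\sum_{n>N}p_n)\|^2+\|C_{v^{-1}}(\sum_{n>N}p_n)\|^2\big)\le 4c^2\sum_{n>N}\|p_n\|^2<\varepsilon^2/4$, uniformly in $u,v$; the head equals $\|q(u^{-1}\cdot z)-q(v^{-1}\cdot z)\|^2$ for the polynomial $q=\sum_{n\le N}p_n$, which by the continuity recorded in the first paragraph (and uniform continuity on the compact $\mathcal U(d)\times\mathcal U(d)$) is $<\varepsilon^2/4$ whenever $\|u-v\|<\delta$, for a suitable $\delta>0$. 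Hence $\|\pi(u)f-\pi(v)f\|<\varepsilon$ for $\|u-v\|<\delta$, which proves that $\pi$ is a strongly continuous representation.
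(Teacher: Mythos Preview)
Your proposal is correct and follows exactly the route the paper indicates, namely to mimic the proof of Lemma~\ref{Cu-strongly-rkhs}. You add a self-contained Baire-category argument for the uniform bound $\sup_{u}\|C_u\|<\infty$, which is precisely what is needed to make the tail estimate independent of $u,v$; the paper's proof of Lemma~\ref{Cu-strongly-rkhs} uses this implicitly (its choice of $N$ is presented after $u,v$ are fixed) without justification, so your version is in fact more complete than the original.
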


Now we present the main result of this section.

\begin{theorem}\label{Cu-bdd}
Let $\mathscr M_z$ be bounded on $\mathcal H^2(\beta)$. Then the following statements are equivalent: 
\vspace{.3cm}
\begin{itemize}
\item[$(i)$] $\mathscr M_z$ on $\mathcal H^2(\beta)$ is weakly $\mathcal U(d)$-homogeneous.\vspace{.2cm}
\item[$(ii)$] $\mathscr M_z$ on $\mathcal H^2(\beta)$ is similar to a $\mathcal U(d)$-homogeneous tuple $\mathscr M_z$ on some $\mathcal H^2(\tilde \beta)$.\\
\Item[$(iii)$] \vspace{-.3cm}\beqn\label{eq-1}
\hspace{-1.5cm}\sup_{n \in \mathbb N}\left\{\max_{|\alpha|=n} \frac{\|z^\alpha\|_{\mathcal H^2(\beta)}}{\sqrt{\alpha!}} \max_{|\delta|=n} \frac{\sqrt{\delta!}}{\|z^\delta\|_{\mathcal H^2(\beta)}}\right\} < \infty.
\eeqn
\item[$(iv)$] The family $\{C_u : u \in \mathcal U(d)\}$ of composition operators on $\mathcal H^2(\beta)$ is uniformly bounded.
\end{itemize}
\end{theorem}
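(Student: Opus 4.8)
The plan is to treat $(iv)$ as the hub and prove $(i)\Leftrightarrow(iv)$, $(iv)\Rightarrow(ii)\Rightarrow(i)$, and $(iii)\Leftrightarrow(iv)$. First I would settle $(i)\Leftrightarrow(iv)$: by Proposition~\ref{prop-2.13}, weak $\mathcal U(d)$-homogeneity of $\mathscr M_z$ is equivalent to boundedness of every $C_u$, so $(iv)\Rightarrow(i)$ is immediate; for the reverse, once each $C_u$ is bounded, Lemma~\ref{Cu-strongly} makes $u\mapsto C_{u^{-1}}$ a strongly continuous representation of the compact group $\mathcal U(d)$, so $u\mapsto\|C_{u^{-1}}f\|$ is continuous, hence bounded, for each $f\in\mathcal H^2(\beta)$, and the uniform boundedness principle gives $\sup_u\|C_{u^{-1}}\|<\infty$, i.e.\ $(iv)$ (since $\{C_{u^{-1}}:u\in\mathcal U(d)\}=\{C_u:u\in\mathcal U(d)\}$).

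Next, for $(iv)\Rightarrow(ii)$ I would average over the group. Put $c:=\sup_u\|C_u\|<\infty$, and using $C_uC_v=C_{vu}$ together with left-invariance of the Haar measure $du$ of $\mathcal U(d)$, define $\ll x,y\gg:=\int_{\mathcal U(d)}\inp{C_ux}{C_uy}\,du$; then each $C_v$ is $\ll\cdot\gg$-unitary, and since $C_u^{-1}=C_{u^{-1}}$ one gets $c^{-1}\|x\|\le\|x\|_{\ll\gg}\le c\|x\|$, so $(\mathcal H^2(\beta),\ll\cdot\gg)$ is a Hilbert space with an equivalent norm. The key observation is that, because $C_u(\mathrm{Hom}(n))=\mathrm{Hom}(n)$, the subspaces $\mathrm{Hom}(n)$ stay pairwise $\ll\cdot\gg$-orthogonal and $\ll\cdot\gg$ restricts to a $\mathcal U(d)$-invariant inner product on each $\mathrm{Hom}(n)$; since $\mathrm{Hom}(n)$ is an irreducible $\mathcal U(d)$-module and the Fischer inner product $\langle\cdot,\cdot\rangle_{\mathcal F}$ (monomials orthogonal with $\|z^\alpha\|_{\mathcal F}^2=\alpha!$; it is the restriction of the Segal--Bargmann inner product, for which composition with unitaries is unitary) is also $\mathcal U(d)$-invariant, Schur's lemma forces $\ll\cdot\gg|_{\mathrm{Hom}(n)}=c_n\langle\cdot,\cdot\rangle_{\mathcal F}$ for some $c_n>0$. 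Hence the monomials remain $\ll\cdot\gg$-orthogonal with $\ll z^\alpha,z^\alpha\gg=c_{|\alpha|}\,\alpha!$, so $(\mathcal H^2(\beta),\ll\cdot\gg)=\mathcal H^2(\tilde\beta)$ with $\tilde\beta_\alpha=\sqrt{c_{|\alpha|}\,\alpha!}$; by \eqref{Cu-intertwine} each $C_v$ intertwines $\mathscr M_z$ with $v\cdot\mathscr M_z$ and is $\ll\cdot\gg$-unitary, so $\mathscr M_z$ on $\mathcal H^2(\tilde\beta)$ is $\mathcal U(d)$-homogeneous, and the identity map $\mathcal H^2(\beta)\to\mathcal H^2(\tilde\beta)$ is a bounded invertible intertwiner, which is $(ii)$. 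Then $(ii)\Rightarrow(i)$ is automatic, any tuple similar to a $\mathcal U(d)$-homogeneous one being weakly $\mathcal U(d)$-homogeneous.

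Finally I would link in $(iii)$. For $(iii)\Rightarrow(iv)$: $C_u$ leaves each $\mathrm{Hom}(n)$ invariant, on $\mathrm{Hom}(n)$ one has $\|p\|_{\mathcal H^2(\beta)}=\|\tilde D_np\|_{\mathcal F}$ where $\tilde D_n$ is the diagonal operator $z^\alpha\mapsto\frac{\|z^\alpha\|_{\mathcal H^2(\beta)}}{\sqrt{\alpha!}}\,z^\alpha$, and $C_u$ is $\langle\cdot,\cdot\rangle_{\mathcal F}$-unitary, so
\[
\|C_u|_{\mathrm{Hom}(n)}\|_{\mathcal H^2(\beta)}=\bigl\|\tilde D_nC_u\tilde D_n^{-1}\bigr\|_{\mathcal F\to\mathcal F}\le\|\tilde D_n\|_{\mathcal F}\,\|\tilde D_n^{-1}\|_{\mathcal F}=\max_{|\alpha|=n}\frac{\|z^\alpha\|_{\mathcal H^2(\beta)}}{\sqrt{\alpha!}}\max_{|\delta|=n}\frac{\sqrt{\delta!}}{\|z^\delta\|_{\mathcal H^2(\beta)}},
\]
and a supremum over $n$, using $(iii)$, gives $\sup_u\|C_u\|<\infty$. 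For $(iv)\Rightarrow(iii)$ I would use $\ll\cdot\gg$ again: from $\|z^\alpha\|_{\mathcal H^2(\beta)}^2\asymp\ll z^\alpha,z^\alpha\gg=c_{|\alpha|}\,\alpha!$ with constants independent of $\alpha$, we get $\|z^\alpha\|_{\mathcal H^2(\beta)}^2/\alpha!\asymp c_{|\alpha|}$ uniformly, so for $|\alpha|=|\delta|=n$ the ratio $\bigl(\|z^\alpha\|_{\mathcal H^2(\beta)}/\sqrt{\alpha!}\bigr)\big/\bigl(\|z^\delta\|_{\mathcal H^2(\beta)}/\sqrt{\delta!}\bigr)$ is bounded uniformly in $n$, which is exactly $(iii)$.

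The hard part is the representation-theoretic step in $(iv)\Rightarrow(ii)$ and $(iv)\Rightarrow(iii)$: recognizing that the $\mathcal U(d)$-averaged inner product is, on each $\mathrm{Hom}(n)$, a scalar multiple of the Fischer inner product (Schur's lemma plus irreducibility of $\mathrm{Hom}(n)$). This is what keeps the monomial basis orthogonal and pins down the comparison weight $\tilde\beta$. A more hands-on approach to $(iv)\Rightarrow(iii)$, through a lower bound on $\sup_u\|C_u|_{\mathrm{Hom}(n)}\|$, looks delicate, because that operator norm need not be comparable to the ``condition number'' $\max_{|\alpha|=n}\frac{\|z^\alpha\|}{\sqrt{\alpha!}}\big/\min_{|\delta|=n}\frac{\|z^\delta\|}{\sqrt{\delta!}}$ figuring in $(iii)$; the averaging argument circumvents this.
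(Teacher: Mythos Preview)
Your argument is correct, and the route you take differs from the paper's in an interesting way. The paper proves the cycle $(i)\Rightarrow(ii)\Rightarrow(iii)\Rightarrow(iv)\Rightarrow(i)$: for $(i)\Rightarrow(ii)$ it invokes Proposition~\ref{gen-thm} abstractly and then appeals to \cite[Theorem~2.5]{CY} (a structure theorem for spherical tuples with one-dimensional cokernel and a cyclic joint kernel) to recognize the resulting $\mathcal U(d)$-homogeneous tuple as $\mathscr M_z$ on some $\mathcal H^2(\tilde\beta)$; for $(ii)\Rightarrow(iii)$ it cites \cite[Theorem~2.1]{CY} to identify $\|z^\alpha\|_{\mathcal H^2(\tilde\beta)}$ and \cite[Lemma~2.2]{K} to transfer the estimate through similarity. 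Your approach replaces both external citations with the single representation-theoretic observation that $\mathrm{Hom}(n)$ is an irreducible $\mathcal U(d)$-module, so by Schur's lemma the Haar-averaged inner product $\ll\cdot,\cdot\gg$ restricted to $\mathrm{Hom}(n)$ must equal $c_n\langle\cdot,\cdot\rangle_{\mathcal F}$; this simultaneously \emph{constructs} $\tilde\beta_\alpha=\sqrt{c_{|\alpha|}\,\alpha!}$ explicitly (giving $(ii)$) and yields $\|z^\alpha\|_{\mathcal H^2(\beta)}^2\asymp c_{|\alpha|}\,\alpha!$ uniformly (giving $(iii)$). The upshot is that your proof is more self-contained and makes transparent \emph{why} the moment condition in $(iii)$ is exactly the right one, whereas the paper's route has the advantage of fitting into the general framework of Proposition~\ref{gen-thm} and the classification results of \cite{CY}. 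Your $(iii)\Rightarrow(iv)$ via the diagonal conjugation $\tilde D_n C_u\tilde D_n^{-1}$ is essentially identical to the paper's commutative-diagram argument, and your $(i)\Rightarrow(iv)$ via the uniform boundedness principle on the compact group is the same mechanism the paper embeds inside the proof of Proposition~\ref{gen-thm}.
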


\begin{proof}$(i) \implies (ii)$: Suppose that $\mathscr M_z$ is weakly $\mathcal U(d)$-homogeneous. Then it follows from Proposition \ref{prop-2.13} and Lemma \ref{Cu-strongly} that the map $\pi : \mathcal U(d) \to \mathcal B\big(\mathcal H^2(\beta)\big)$ given by $\pi(u) = C_{u^{-1}}$, $u \in \mathcal U(d)$, is a strongly continuous representation of $\mathcal U(d)$ into $\mathcal B\big(\mathcal H^2(\beta)\big)$. Hence, by Proposition \ref{gen-thm}, there exists a bounded invertible operator $S$ on $\mathcal H^2(\beta)$ such that the $d$-tuple $S \mathscr M_z S^{-1} := (S \mathscr M_{z_1} S^{-1}, \ldots, S \mathscr M_{z_d} S^{-1})$ is $\mathcal U(d)$-homogeneous. It is not difficult to verify that $\dim \ker S^{*-1} \mathscr M_z^* S^* = 1$ and $\ker S^{*-1} \mathscr M_z^* S^*$ is a cyclic subspace for $S \mathscr M_z S^{-1}$. Therefore, by \cite[Theorem 2.5]{CY}, we get that $S \mathscr M_z S^{-1}$ is unitarily equivalent to $\mathscr M_z$ on some $\mathcal H^2(\tilde\beta)$. Consequently, we obtain that $\mathscr M_z$ on $\mathcal H^2(\beta)$ is similar to a $\mathcal U(d)$-homogeneous tuple $\mathscr M_z$ on some $\mathcal H^2(\tilde \beta)$, which proves $(ii)$.\\  

$(ii) \implies (iii)$: Suppose that $\mathscr M_z$ on $\mathcal H^2(\beta)$ is similar to a $\mathcal U(d)$-homogeneous tuple $\mathscr M_z$ on some $\mathcal H^2(\tilde\beta)$. It follows from \cite[Theorem 2.1]{CY} that there exists a sequence $(a_n)_{n \geqslant 0}$ of positive numbers such that  
\beqn\label{eq-3}
\|z^\alpha\|_{\mathcal H^2(\tilde\beta)} = a_{|\alpha|}\frac{\sqrt{(d-1)!\alpha!}}{\sqrt{(d-1+|\alpha|)!}}\ \text{ for all }\ \alpha \in \mathbb N^d.
\eeqn
This gives that
\beq\label{eq-3.5}
\max_{|\alpha|=n} \frac{\|z^\alpha\|_{\mathcal H^2(\tilde\beta)}}{\sqrt{\alpha!}} \max_{|\alpha|=n} \frac{\sqrt{\alpha!}}{\|z^\alpha\|_{\mathcal H^2(\tilde\beta)}} &=& a_{n}\frac{\sqrt{(d-1)!}}{\sqrt{(d-1+n)!}} \frac{\sqrt{(d-1+n)!}}{a_{n}\sqrt{(d-1)!}}\notag\\
&=& 1. 
\eeq
Also, since $\mathscr M_z$ on $\mathcal H^2(\beta)$ is similar to $\mathscr M_z$ on $\mathcal H^2(\tilde\beta)$, it follows from \cite[Lemma 2.2]{K} that there exist positive constants $m_1$ and $m_2$ such that 
\beq\label{eq-4}
m_1 \leqslant \frac{\|z^\alpha\|_{\mathcal H^2(\beta)}}{\|z^\alpha\|_{\mathcal H^2(\tilde\beta)}} \leqslant m_2\ \text{ for all }\ \alpha \in \mathbb N^d.
\eeq
Consequently, we have
\beqn
m_1 \frac{\|z^\alpha\|_{\mathcal H^2(\tilde\beta)}}{\sqrt{\alpha!}} \leqslant \frac{\|z^\alpha\|_{\mathcal H^2(\beta)}}{\sqrt{\alpha!}} \leqslant m_2 \frac{\|z^\alpha\|_{\mathcal H^2(\tilde\beta)}}{\sqrt{\alpha!}}
\eeqn
for all $\alpha \in \mathbb N^d$, which in turn implies that for all $n \in \mathbb N$,
\beq\label{eq-5}
m_1 \max_{|\alpha|=n}\frac{\|z^\alpha\|_{\mathcal H^2(\tilde\beta)}}{\sqrt{\alpha!}} \leqslant \max_{|\alpha|=n} \frac{\|z^\alpha\|_{\mathcal H^2(\beta)}}{\sqrt{\alpha!}} \leqslant m_2 \max_{|\alpha|=n} \frac{\|z^\alpha\|_{\mathcal H^2(\tilde\beta)}}{\sqrt{\alpha!}}.
\eeq
 Now interchanging the role of $\tilde\beta$ and $\beta$ in \eqref{eq-4} and proceeding similarly, we obtain that
\beq\label{eq-6}
c_1 \max_{|\alpha|=n}\frac{\sqrt{\alpha!}}{\|z^\alpha\|_{\mathcal H^2(\tilde\beta)}} \leqslant \max_{|\alpha|=n} \frac{\sqrt{\alpha!}}{\|z^\alpha\|_{\mathcal H^2(\beta)}} \leqslant c_2 \max_{|\alpha|=n} \frac{\sqrt{\alpha!}}{\|z^\alpha\|_{\mathcal H^2(\tilde\beta)}}
\eeq
for some positive constants $c_1,\, c_2$ and for all $n \in \mathbb N$. By \eqref{eq-5} and \eqref{eq-6} together with \eqref{eq-3.5}, we conclude that 
\beqn
\sup_{n \in \mathbb N}\left\{\max_{|\alpha|=n} \frac{\|z^\alpha\|_{\mathcal H^2(\beta)}}{\sqrt{\alpha!}} \max_{|\delta|=n} \frac{\sqrt{\delta!}}{\|z^\delta\|_{\mathcal H^2(\beta)}}\right\} < \infty.
\eeqn

$(iii) \implies (iv)$: Assume that $(iii)$ holds. Fix $u \in \mathcal U(d)$. We first show that $C_u$ is bounded on $\mathcal H^2(\beta)$. Note that 
\beqn
\mathcal H^2(\beta) = \bigoplus_{n\in \mathbb N} \text{Hom}(n)
\eeqn
and for all $u \in \mathcal U(d)$,
\beqn
C_u(\text{Hom}(n)) = \text{Hom}(n), \quad n \in \mathbb N.
\eeqn
In view of this, it is enough to show that $\sup_{n \in \mathbb N} \|C_u|_{\text{Hom}(n)}\| < \infty$. For $n \in \mathbb N$, let $(\text{Hom}(n), \|\cdot\|_F)$ denote the Hilbert space of all homogeneous polynomials of degree $n$ endowed with the Fischer-Fock inner product. Now consider the following diagram:
\[\begin{tikzcd}
\text{Hom}(n) \arrow{r}{C_u} \arrow{d}{I_1} & \text{Hom}(n) \\
(\text{Hom}(n), \|\cdot\|_F) \arrow{r}{C_u}& (\text{Hom}(n), \|\cdot\|_F) \arrow{1-2}{I_2}
\end{tikzcd}
\]
where $I_1$ and $I_2$ are respective identity operators. Clearly, the diagram commutes, and hence, it yields
\beq\label{eq-2}
\|C_u|_{\text{Hom}(n)}\| = \|I_2 C_u|_{(\text{Hom}(n), \|\cdot\|_F)} I_1\| \leqslant \|I_2\| \|C_u|_{(\text{Hom}(n), \|\cdot\|_F)}\| \|I_1\|.
\eeq
Note that $\|C_u|_{(\text{Hom}(n), \|\cdot\|_F)}\| = 1$. Further, 
\beqn
\|I_1 z^\alpha\|_F &=& \|z^\alpha\|_F = \frac{\|z^\alpha\|_{\mathcal H^2(\beta)}}{\|z^\alpha\|_{\mathcal H^2(\beta)}}\|z^\alpha\|_F \leqslant \left(\max_{|\alpha| = n} \frac{\|z^\alpha\|_F}{\|z^\alpha\|_{\mathcal H^2(\beta)}}\right) \|z^\alpha\|_{\mathcal H^2(\beta)}\\ 
&=& \left(\max_{|\alpha| = n} \frac{\sqrt{\alpha!}}{\|z^\alpha\|_{\mathcal H^2(\beta)}}\right) \|z^\alpha\|_{\mathcal H^2(\beta)}.
\eeqn
Similarly, we get
\beqn
\|I_2 z^\alpha\|_{\mathcal H^2(\beta)} \leqslant \left(\max_{|\alpha| = n} \frac{\|z^\alpha\|_{\mathcal H^2(\beta)}}{\sqrt{\alpha!}}\right) \|z^\alpha\|_F.
\eeqn
Putting these values in \eqref{eq-2}, we get
\beqn
\|C_u|_{\text{Hom}(n)}\| \leqslant \max_{|\alpha|=n} \frac{\|z^\alpha\|_{\mathcal H^2(\beta)}}{\sqrt{\alpha!}} \max_{|\delta|=n} \frac{\sqrt{\delta!}}{\|z^\delta\|_{\mathcal H^2(\beta)}}. 
\eeqn
Consequently, we obtain that
\beqn
\|C_u\| = \sup_{n \in \mathbb N} \|C_u|_{\text{Hom}(n)}\| \leqslant \sup_{n \in \mathbb N}\left\{\max_{|\alpha|=n} \frac{\|z^\alpha\|}{\sqrt{\alpha!}} \max_{|\alpha|=n} \frac{\sqrt{\alpha!}}{\|z^\alpha\|}\right\}
\eeqn
which is finite by $(iii)$. Since the quantity on extreme right in the above inequality is independent of $u$, we conclude that the family $\{C_u : u \in \mathcal U(d)\}$ of composition operators on $\mathcal H^2(\beta)$ is uniformly bounded, which proves $(iv)$.
\\

$(iv) \implies (i)$: Follows from Proposition \ref{prop-2.13}. This completes the proof.
\end{proof}

\section{Weakly $\mathcal U(d)$-homogeneous spherically balanced multishifts}

In this section, we refine the results of the preceding section for the class of spherically balanced multishifts. To explain the main result of this section, we recall the definition of balanced multishift from \cite{C-K}. 

Let $\mathscr M_z$ be bounded on $\mathcal H^2(\beta)$. Then following \cite[Definition 1.8]{C-K}, we say that $\mathcal H^2(\beta)$ is {\it spherically balanced} if 
$\beta = \{\beta_\alpha : \alpha \in \mathbb N^d\}$ satisfies
\beqn
\sum_{k=1}^d \frac{\beta^2_{\alpha+\varepsilon_i+\varepsilon_k}}{\beta^2_{\alpha+\varepsilon_i}} = \sum_{k=1}^d \frac{\beta^2_{\alpha+\varepsilon_j+\varepsilon_k}}{\beta^2_{\alpha+\varepsilon_j}}\ \text{ for all }\ \alpha \in \mathbb N^d,\ \text{ and }\ i,j = 1, \ldots, d, 
\eeqn
where $\varepsilon_j$ denotes the $d$-tuple with $1$ at the $j$-th place and zeros elsewhere. The operator tuple $\mathscr M_z$ on the spherically balanced $\mathcal H^2(\beta)$ is referred to as the {\it spherically balanced multishift}. It follows from \cite[Theorem 1.10]{C-K} that every spherically balanced $\mathcal H^2(\beta)$ admits a slice representation $[\mu,\ \mathcal H^2(\gamma)]$, with $\mu$ being a finite positive Reinhardt measure supported on $\partial\mathbb B_d$ and $\mathcal H^2(\gamma)$ being a Hilbert space of formal power series in one variable, in the sense that
\beqn
\|f\|^2_{\mathcal H^2(\beta)} = \int_{\partial\mathbb B_d} \|f_z\|^2_{\mathcal H^2(\gamma)} d\mu(z), \quad f \in \mathcal H^2(\beta),
\eeqn
where $f_z \in \mathcal H^2(\gamma)$ is referred to as the {\it slice of $f$} at $z \in \mathbb C^d$, and is given by
\beqn
f_z(t) = f(tz), \quad t \in \mathbb C. 
\eeqn
In particular, for every $\alpha \in \mathbb N^d$, we have
\beqn
\beta_\alpha = \gamma_{|\alpha|} \|z^\alpha\|_{L^2(\partial\mathbb B_d,\, \mu)}, 
\eeqn
where $\gamma_n = \|t^n\|_{\mathcal H^2(\gamma)}$, $n \in \mathbb N$.

We proceed with the following elementary lemma which provides a sufficient condition for $C_u$ to be bounded on the spherically balanced $\mathcal H^2(\beta)$.

\begin{lemma}\label{lem-Cu-RD}
Let $\mathcal H^2(\beta)$ be spherically balanced with the slice representation $[\mu,\ \mathcal H^2(\gamma)]$, where $\mu$ is a Reinhardt measure supported on $\partial\mathbb B_d$ and $\mathcal H^2(\gamma)$ is a Hilbert space of formal power series in one variable. Let $u \in \mathcal U(d)$. If the Radon-Nikodym derivative $\frac{d\mu\circ u^{-1}}{d\mu}$ is essentially bounded, then $C_u$ is bounded on $\mathcal H^2(\beta)$. 
\end{lemma}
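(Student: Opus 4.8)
The plan is to reduce the boundedness of $C_u$ to a pointwise comparison of measures via the slice representation. First I would fix $u\in\mathcal U(d)$ and note that, since the Radon–Nikodym derivative $\frac{d\mu\circ u^{-1}}{d\mu}$ exists and is essentially bounded, we have $\mu\circ u^{-1}\ll\mu$ with $\frac{d\mu\circ u^{-1}}{d\mu}\le M$ $\mu$-a.e. for some finite $M$. Because polynomials form a dense subspace of $\mathcal H^2(\beta)$ lying in the domain of $C_u$, it suffices to establish the estimate $\|C_u p\|_{\mathcal H^2(\beta)}\le\sqrt M\,\|p\|_{\mathcal H^2(\beta)}$ for every polynomial $p$; this then extends $C_u$ to a bounded operator on all of $\mathcal H^2(\beta)$.

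The key computation is the behaviour of slices under $C_u$. For a polynomial $p$, the function $C_u p$ is again a polynomial, hence lies in $\mathcal H^2(\beta)$, and using the linearity of $u$ one gets $(C_u p)_z(t)=(C_u p)(tz)=p\big(t(u\cdot z)\big)=p_{u\cdot z}(t)$, that is, the slice of $C_u p$ at $z$ coincides with the slice of $p$ at $u\cdot z$. Feeding this into the slice representation $\|f\|^2_{\mathcal H^2(\beta)}=\int_{\partial\mathbb B_d}\|f_z\|^2_{\mathcal H^2(\gamma)}\,d\mu(z)$ with $f=C_u p$ turns $\|C_u p\|^2_{\mathcal H^2(\beta)}$ into $\int_{\partial\mathbb B_d}\|p_{u\cdot z}\|^2_{\mathcal H^2(\gamma)}\,d\mu(z)$.

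Next I would change variables $w=u\cdot z$. Since $u$ is unitary it maps $\partial\mathbb B_d$ homeomorphically onto itself, and the push-forward of $\mu$ under $z\mapsto u\cdot z$ is precisely $\mu\circ u^{-1}$; hence $\|C_u p\|^2_{\mathcal H^2(\beta)}=\int_{\partial\mathbb B_d}\|p_w\|^2_{\mathcal H^2(\gamma)}\,d(\mu\circ u^{-1})(w)=\int_{\partial\mathbb B_d}\|p_w\|^2_{\mathcal H^2(\gamma)}\,\frac{d\mu\circ u^{-1}}{d\mu}(w)\,d\mu(w)$. Bounding the Radon–Nikodym factor by $M$ and invoking the slice representation once more yields $\|C_u p\|^2_{\mathcal H^2(\beta)}\le M\,\|p\|^2_{\mathcal H^2(\beta)}$, which completes the argument.

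I do not expect a serious obstacle: the lemma is essentially bookkeeping around the slice representation. The only points needing a little care are (i) checking that the slice operation intertwines with $C_u$ exactly as claimed — which rests only on the linearity of $u$ — and (ii) being precise about which measure is the push-forward of $\mu$ under $z\mapsto u\cdot z$, so that the Radon–Nikodym derivative appearing in the hypothesis is the one that genuinely shows up after the substitution. Passing first to polynomials, rather than arguing directly with an arbitrary $f\in\mathcal H^2(\beta)$, is what lets us sidestep any domain subtleties for the a priori only densely defined $C_u$.
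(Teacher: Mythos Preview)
Your proposal is correct and follows essentially the same route as the paper: compute the slice $(C_u f)_z=f_{u\cdot z}$, plug into the slice representation, change variables to bring in $\mu\circ u^{-1}$, and bound by the $L^\infty$-norm of the Radon--Nikodym derivative. The only cosmetic difference is that the paper applies the slice formula directly to an arbitrary $f\in\mathcal H^2(\beta)$ rather than first restricting to polynomials and extending by density.
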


\begin{proof}
Suppose that the Radon-Nikodym derivative $\frac{d\mu\circ u^{-1}}{d\mu}$ is essentially bounded. Let $f \in \mathcal H^2(\beta)$. Then 
\beqn
\|f\|^2_{\mathcal H^2(\beta)} = \int_{\partial\mathbb B_d} \|f_z\|^2_{\mathcal H^2(\gamma)} d\mu(z).
\eeqn
Fix $u \in \mathcal U(d)$. Note that $(f\circ u)_z(t) = (f\circ u)(tz) = f(tu\cdot z) = f_{u \cdot z}(t)$, $z \in \mathbb C^d,\ t \in \mathbb C$. Thus, we have
\beqn
\|f\circ u\|^2_{\mathcal H^2(\beta)} &=& \int_{\partial\mathbb B_d} \|(f\circ u)_z\|^2_{\mathcal H^2(\gamma)} d\mu(z) = \int_{\partial\mathbb B_d} \|f_{u\cdot z}\|^2_{\mathcal H^2(\gamma)} d\mu(z)\\
&=& \int_{\partial\mathbb B_d} \|f_z\|^2_{\mathcal H^2(\gamma)} d\mu(u^{-1}\cdot z) = \int_{\partial\mathbb B_d} \|f_z\|^2_{\mathcal H^2(\gamma)} \frac{d\mu\circ u^{-1}}{d\mu}(z) d\mu(z)\\
&\leqslant& \Big\| \frac{d\mu\circ u^{-1}}{d\mu}\Big\|_{L^\infty(\mu, \partial\mathbb B_d)} \|f\|^2_{\mathcal H^2(\beta)}.
\eeqn
Thus, $C_u$ is bounded on $\mathcal H^2(\beta)$.
\end{proof}

Let $\mu$ be a finite positive Reinhardt measure supported on $\partial\mathbb B_d$ and $\mathcal H^2(\mu)$ be the closure of polynomials in $L^2(\partial\mathbb B_d, \mu)$. Then the monomials are orthogonal in $\mathcal H^2(\mu)$. In other words, $\mathcal H^2(\mu)$ is a special case of $\mathcal H^2(\beta)$. The following result is an intermediate step in the proof of the main result which also refines Theorem \ref{Cu-bdd}. In particular, it shows that $\mathscr M_z$ on $\mathcal H^2(\mu)$ is weakly $\mathcal U(d)$-homogeneous if and only if it is similar to the Szeg\"o shift.

\begin{theorem}\label{wk-hom-H2mu}
Let $\mu$ be a finite positive Reinhardt measure supported on $\partial\mathbb B_d$ and $\mathcal H^2(\mu)$ be the closure of polynomials in $L^2(\partial\mathbb B_d, \mu)$. Suppose that $\mathscr M_z$ is bounded on $\mathcal H^2(\mu)$. Then the following statements are equivalent:
\begin{itemize}
\item[$(i)$] $\mathscr M_z$ is weakly $\mathcal U(d)$-homogeneous.
\item[$(ii)$] There exist positive constants $k$ and $K$ such that for all $u \in \mathcal U(d)$, 
\[k \leqslant \frac{d\mu\circ u^{-1}}{d\mu} (z) \leqslant K \ \text{ for $\mu$-almost every }\ z \in \partial\mathbb B_d.\]
\item[$(iii)$] $\mathscr M_z$ is similar to the Szeg\"o shift.
\end{itemize}
\end{theorem}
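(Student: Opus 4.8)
The plan is to prove $(i)\Leftrightarrow(iii)$ together with $(iii)\Rightarrow(ii)\Rightarrow(i)$. Throughout, let $\sigma$ be the rotation-invariant probability measure on $\partial\mathbb B_d$, so that $\sigma$ is $\mathcal U(d)$-invariant, $\|z^\alpha\|_{\mathcal H^2(\sigma)}^2=\int_{\partial\mathbb B_d}|z^\alpha|^2\,d\sigma=\frac{(d-1)!\,\alpha!}{(d-1+|\alpha|)!}$, and $\mathscr M_z$ on $\mathcal H^2(\sigma)$ is exactly the Szeg\"o shift (the $\mathcal U(d)$-homogeneous multishift of \cite[Theorem 2.1]{CY} with $a_n\equiv1$). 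Since $\mu$ is Reinhardt, $\mathcal H^2(\mu)$ is a spherically balanced $\mathcal H^2(\beta)$ with slice representation $[\mu,\mathcal H^2(\gamma)]$, $\gamma_n\equiv1$, so Theorem \ref{Cu-bdd} and Lemma \ref{lem-Cu-RD} apply to it. The crux is the following observation: \emph{if $\mathscr M_z$ on $\mathcal H^2(\mu)$ is weakly $\mathcal U(d)$-homogeneous, then there are constants $0<a\le b$ such that $a\,\|z^\alpha\|_{\mathcal H^2(\sigma)}^2\le\|z^\alpha\|_{\mathcal H^2(\mu)}^2\le b\,\|z^\alpha\|_{\mathcal H^2(\sigma)}^2$ for all $\alpha\in\mathbb N^d$}. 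To prove it, use Theorem \ref{Cu-bdd} to fix $c$ with $\|C_u\|\le c$ for all $u$; from $C_uC_v=C_{vu}$ we get $C_u^{-1}=C_{u^{-1}}$, hence $c^{-2}\|z^\alpha\|_{\mathcal H^2(\mu)}^2\le\|C_uz^\alpha\|_{\mathcal H^2(\mu)}^2\le c^2\|z^\alpha\|_{\mathcal H^2(\mu)}^2$ for every $u$; then integrate over $\mathcal U(d)$ with respect to Haar measure $du$, using Tonelli and the fact that the push-forward of $du$ under $u\mapsto u\cdot z$ equals $\sigma$ for each fixed $z\in\partial\mathbb B_d$:
\[\int_{\mathcal U(d)}\|C_uz^\alpha\|_{\mathcal H^2(\mu)}^2\,du=\int_{\partial\mathbb B_d}\Big(\int_{\mathcal U(d)}|(u\cdot z)^\alpha|^2\,du\Big)\,d\mu(z)=\mu(\partial\mathbb B_d)\,\|z^\alpha\|_{\mathcal H^2(\sigma)}^2 .\]
Comparing the last two displays yields the observation with $a=\mu(\partial\mathbb B_d)/c^2$ and $b=c^2\mu(\partial\mathbb B_d)$.

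Granting the observation, three of the implications are short. $(i)\Rightarrow(iii)$: the norm comparability makes the formal identity map $\mathcal H^2(\mu)\to\mathcal H^2(\sigma)$, $\sum a_\alpha z^\alpha\mapsto\sum a_\alpha z^\alpha$, a bounded invertible operator which clearly intertwines each $\mathscr M_{z_j}$, so $\mathscr M_z$ on $\mathcal H^2(\mu)$ is similar to the Szeg\"o shift. $(iii)\Rightarrow(i)$: because $\sigma$ is $\mathcal U(d)$-invariant, $C_u$ is unitary on $\mathcal H^2(\sigma)$ for every $u$, and by \eqref{Cu-intertwine} it intertwines $\mathscr M_z$ with $u\cdot\mathscr M_z$; hence the Szeg\"o shift is $\mathcal U(d)$-homogeneous and any tuple similar to it is weakly $\mathcal U(d)$-homogeneous. $(ii)\Rightarrow(i)$: if $k\le\frac{d\mu\circ u^{-1}}{d\mu}\le K$ for all $u$, the estimate in the proof of Lemma \ref{lem-Cu-RD} gives $\|C_u\|^2\le K$ for all $u$, so $\{C_u:u\in\mathcal U(d)\}$ is uniformly bounded and Theorem \ref{Cu-bdd} gives weak $\mathcal U(d)$-homogeneity.

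The one substantive implication is $(iii)\Rightarrow(ii)$. By \cite[Lemma 2.2]{K}, similarity with the Szeg\"o shift yields constants $0<m_1\le m_2$ with $m_1\int_{\partial\mathbb B_d}|z^\alpha|^2\,d\sigma\le\int_{\partial\mathbb B_d}|z^\alpha|^2\,d\mu\le m_2\int_{\partial\mathbb B_d}|z^\alpha|^2\,d\sigma$ for all $\alpha$. I would then upgrade this comparison of moments to the comparison of measures $m_1\sigma\le\mu\le m_2\sigma$ as follows. A finite positive Reinhardt measure on $\partial\mathbb B_d$ is the push-forward under $q:z\mapsto(|z_1|^2,\dots,|z_d|^2)$ of a unique finite positive Borel measure on the simplex $\Delta_{d-1}=\{r\in[0,1]^d:\sum_jr_j=1\}$, and $\mu\mapsto q_*\mu$ is a moment-preserving order isomorphism carrying $|z^\alpha|^2$ to $r^\alpha$. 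Writing $\widehat\mu,\widehat\sigma$ for the images of $\mu,\sigma$, the finite signed measure $\lambda:=\widehat\mu-m_1\widehat\sigma$ satisfies $\int_{\Delta_{d-1}}r^\alpha\,d\lambda\ge0$ for all $\alpha$, hence $\int_{\Delta_{d-1}}p\,d\lambda\ge0$ for every polynomial $p$ with nonnegative monomial coefficients; since any nonnegative $g\in C(\Delta_{d-1})$ is a uniform limit of its Bernstein polynomials $B_ng(r)=\sum_{|\kappa|=n}g(\kappa/n)\tfrac{n!}{\kappa!}\,r^\kappa$, which have nonnegative coefficients, we get $\int_{\Delta_{d-1}}g\,d\lambda=\lim_n\int_{\Delta_{d-1}}B_ng\,d\lambda\ge0$, so $\lambda\ge0$, that is, $\widehat\mu\ge m_1\widehat\sigma$; applying the same reasoning to $m_2\widehat\sigma-\widehat\mu$ and pulling back through the order isomorphism gives $m_1\sigma\le\mu\le m_2\sigma$. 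Finally, for every $u\in\mathcal U(d)$ the $\mathcal U(d)$-invariance of $\sigma$ gives $\tfrac{m_1}{m_2}\mu\le m_1\sigma=m_1\,\sigma\circ u^{-1}\le\mu\circ u^{-1}\le m_2\,\sigma\circ u^{-1}=m_2\sigma\le\tfrac{m_2}{m_1}\mu$, so $\mu\circ u^{-1}$ and $\mu$ are mutually absolutely continuous with $\tfrac{m_1}{m_2}\le\frac{d\mu\circ u^{-1}}{d\mu}\le\tfrac{m_2}{m_1}$ $\mu$-a.e., which is $(ii)$ with $k=m_1/m_2$, $K=m_2/m_1$.

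The step I expect to take the most care is this last upgrade from comparable moments to comparable measures; the rest is a matter of combining Theorem \ref{Cu-bdd}, Lemma \ref{lem-Cu-RD}, the averaging identity, and \cite[Lemma 2.2]{K}. A few routine facts should be checked explicitly: that $C_u^{-1}=C_{u^{-1}}$ and $C_u$ is isometric on $\mathcal H^2(\sigma)$; that norm comparability of monomials makes the formal identity a bounded invertible intertwiner of the two multishift tuples; and that $\mu\mapsto q_*\mu$ really is a moment-preserving order isomorphism between the Reinhardt measures on $\partial\mathbb B_d$ and the Borel measures on $\Delta_{d-1}$, so that inequalities between $\widehat\mu$ and $\widehat\sigma$ transfer back to inequalities between $\mu$ and $\sigma$.
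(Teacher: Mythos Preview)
Your argument is correct, but the route differs substantially from the paper's. The paper proves the cycle $(i)\Rightarrow(ii)\Rightarrow(iii)\Rightarrow(i)$: for $(i)\Rightarrow(ii)$ it shows that $\mathscr M_z$ on $\mathcal H^2(\mu)$ is similar to $\mathscr M_z$ on $\mathcal H^2(\mu\circ u^{-1})$ via $X_u=T_{u^{-1}}C_u$, and then invokes Athavale's theorem \cite[Theorem~1]{A} on quasi-similar subnormal tuples to obtain mutual absolute continuity and the pointwise bounds $\|X_u^{-1}\|^{-1}\le \tfrac{d\mu\circ u^{-1}}{d\mu}\le\|X_u\|$, which become uniform by Theorem~\ref{Cu-bdd}; for $(ii)\Rightarrow(iii)$ it averages the measures $\mu\circ u^{-1}$ over $\mathcal U(d)$ to produce (a multiple of) $\sigma$ and reads off $k\le\tfrac{d\sigma}{d\mu}\le K$ directly. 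You instead run the Haar averaging at the level of \emph{moments} of monomials to go from $(i)$ straight to $(iii)$, and then recover $(ii)$ from $(iii)$ by upgrading the moment comparison $m_1\|z^\alpha\|_\sigma^2\le\|z^\alpha\|_\mu^2\le m_2\|z^\alpha\|_\sigma^2$ to a measure comparison $m_1\sigma\le\mu\le m_2\sigma$ via push-forward to the simplex and multivariate Bernstein approximation. Your approach is more self-contained---it avoids the external input from \cite{A} entirely---while the paper's route has the advantage of extracting $(ii)$ directly from $(i)$ with explicit constants $\|C_u\|^{\pm1}$, and its measure-averaging step is somewhat cleaner than the Bernstein argument. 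The two Haar-averaging ideas (yours on $\|C_uz^\alpha\|^2$, theirs on $\mu\circ u^{-1}$) are really the same computation viewed dually. The points you flagged as needing care (the order-isomorphism property of $q_*$ on Reinhardt measures, and the Bernstein approximation on $\Delta_{d-1}$) are genuine but standard, and your outline handles them adequately.
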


\begin{proof}
$(i) \implies (ii)$: Suppose that $\mathscr M_z$ is weakly $\mathcal U(d)$-homogeneous. Then it follows from Proposition \ref{prop-2.13} that $C_u$ is bounded on $\mathcal H^2(\mu)$ for all $u \in \mathcal U(d)$. Consequently, $C_u$ is invertible on $\mathcal H^2(\mu)$ for all $u \in \mathcal U(d)$, and it follows from \eqref{Cu-intertwine} that 
\[C_u \mathscr M_{z_j} = (u\cdot \mathscr M_z)_j C_u\ \text{ for all } j = 1, \ldots, d\ \text{ and }\ u \in \mathcal U(d).\]
Consider the map $T_{u^{-1}} : \mathcal H^2(\mu) \to \mathcal H^2(\mu\circ u^{-1})$ given by
\[T_{u^{-1}} f(z) = f(u^{-1} \cdot z), \quad f \in \mathcal H^2(\mu).\]
Then $T_{u^{-1}}$ is a unitary. Indeed, for $f \in \mathcal H^2(\mu)$, we have
\beqn
\|T_{u^{-1}} f\|^2_{\mathcal H^2(\mu\circ u^{-1})} &=& \int_{\partial\mathbb B_d} |f(u^{-1} \cdot z)|^2 d\mu\circ u^{-1}(z) = \int_{\partial\mathbb B_d} |f(w)|^2 d\mu(w)\\
&=& \|f\|^2_{\mathcal H^2(\mu)}.
\eeqn 
Further, it follows from the proof of Lemma \ref{lem-Ud} that
\[T_{u^{-1}} (u \cdot \mathscr M_z)_j = \mathscr M_{z_j} T_{u^{-1}} \ \text{ for all } j=1, \ldots, d.\]
Thus, $u \cdot \mathscr M_z$ on $\mathcal H^2(\mu)$ is unitarily equivalent to $\mathscr M_z$ on $\mathcal H^2(\mu\circ u^{-1})$. Now consider the map $X_u := T_{u^{-1}} C_u : \mathcal H^2(\mu) \to \mathcal H^2(\mu\circ u^{-1})$. Then $X_u$ is invertible and for all $j=1, \ldots, d$, we have
\beqn
X_u \mathscr M_{z_j} = T_{u^{-1}} C_u \mathscr M_{z_j} = T_{u^{-1}} (u\cdot \mathscr M_z)_j C_u = \mathscr M_{z_j} T_{u^{-1}} C_u = \mathscr M_{z_j} X_u.
\eeqn 
Thus, $\mathscr M_z$ on $\mathcal H^2(\mu)$ is similar to $\mathscr M_z$ on $\mathcal H^2(\mu \circ u^{-1})$. It follows from \cite[Theorem 1]{A} that $\mu$ and $\mu \circ u^{-1}$ are mutually absolutely continuous and (taking the cyclic vectors $f=g=1$ in \cite[Theorem 1]{A}) 
\beq\label{mut-abs}
\frac{1}{\|X_u^{-1}\|} \int_{\partial\mathbb B_d} |p(z)|^2 d\mu(z) \leqslant \int_{\partial\mathbb B_d} |p(z)|^2 d\mu\circ u^{-1}(z) \leqslant \|X_u\| \int_{\partial\mathbb B_d} |p(z)|^2 d\mu(z)
\eeq 
for all polynomials $p$. Considering the right inequality in \eqref{mut-abs}, we get
\[\int_{\partial\mathbb B_d} \Big(\|X_u\| - \frac{d\mu\circ u^{-1}}{d\mu}(z)\Big)|p(z)|^2 d\mu(z) \geqslant 0\]
for all polynomials $p$. Appealing to \cite[Theorem 0]{A} and using the fact that characteristic (or indicator) functions can be approximated by positive continuous functions, we obtain that
\[\int_{\Delta} \Big(\|X_u\| - \frac{d\mu\circ u^{-1}}{d\mu}(z)\Big) d\mu(z) \geqslant 0 \] 
for every measurable subset $\Delta \subset \partial\mathbb B_d.$ Consequently, we  get that 
\beq\label{mut-abs-1}
\frac{d\mu\circ u^{-1}}{d\mu}(z) \leqslant \|X_u\| \ \text{ for $\mu$-almost every }\ z \in \partial\mathbb B_d.
\eeq 
Similar arguments while considering the left inequality in \eqref{mut-abs} yields
\beq\label{mut-abs-2}
\frac{1}{\|X_u^{-1}\|} \leqslant \frac{d\mu\circ u^{-1}}{d\mu}(z) \ \text{ for $\mu$-almost every }\ z \in \partial\mathbb B_d.
\eeq 
By \eqref{mut-abs-1} and \eqref{mut-abs-2}, we obtain that
\beq\label{mut-abs-3}
\frac{1}{\|X_u^{-1}\|} \leqslant \frac{d\mu\circ u^{-1}}{d\mu}(z) \leqslant \|X_u\| \ \text{ for $\mu$-almost every }\ z \in \partial\mathbb B_d.
\eeq
Since $T_{u^{-1}}$ is a unitary, we have $\|X_u\| = \|C_u\|$ and $\|X_u^{-1}\| = \|C_{u^*}\|$ for all $u \in \mathcal U(d)$. Substituting these values in \eqref{mut-abs-3}, we get
\beqn
\inf_{u \in \mathcal U(d)}\frac{1}{\|C_{u^*}\|} \leqslant \frac{d\mu\circ u^{-1}}{d\mu}(z) \leqslant \sup_{u \in \mathcal U(d)}\|C_u\| \ \text{ for $\mu$-almost every }\ z \in \partial\mathbb B_d.
\eeqn
Since $\mathscr M_z$ is weakly $\mathcal U(d)$-homogeneous, it follows from Theorem \ref{Cu-bdd} that the family $\{C_u : u \in \mathcal U(d)\}$ of composition operators on $\mathcal H^2(\mu)$ is uniformly bounded. Let $K := \sup_{u \in \mathcal U(d)}\|C_u\|$. Also, note that 
\[\inf_{u \in \mathcal U(d)}\frac{1}{\|C_{u^*}\|} = \frac{1}{\sup_{u \in \mathcal U(d)}\|C_{u^*}\|} = \frac{1}{\sup_{u \in \mathcal U(d)}\|C_u\|} > 0.\]
Let $k := \inf_{u \in \mathcal U(d)}\frac{1}{\|C_{u^*}\|}$. Putting these values in above inequality, we obtain that for all $u \in \mathcal U(d)$, 
\[k \leqslant \frac{d\mu\circ u^{-1}}{d\mu}(z) \leqslant K \ \text{ for $\mu$-almost every }\ z \in \partial\mathbb B_d.\]
This completes the verification of $(ii)$.\\

$(ii) \implies (iii)$: Assume that $(ii)$ holds. For each Borel measurable subset $\Delta$ of $\partial\mathbb B_d$, define
\beqn
\nu(\Delta) := \int_{\mathcal U(d)} \mu\circ u^{-1}(\Delta) du,
\eeqn
where $du$ is the Haar measure on the compact group $\mathcal U(d)$. Since $du$ is left-invariant measure, it follows that $\nu$ is $\mathcal U(d)$-invariant. In fact, for any Borel measurable subset $\Delta$ of $\partial\mathbb B_d$ and $v \in \mathcal U(d)$, we have
\beqn
\nu(v \Delta) = \int_{\mathcal U(d)} \mu \circ u^{-1} (v \Delta) du.
\eeqn   
Let $u^{-1} v = w^{-1}$. Then $u = vw$ and hence, $du = d(vw)$. This gives that
\beqn
\nu(v \Delta) = \int_{\mathcal U(d)} \mu \circ w^{-1}(\Delta) d(vw) = \int_{\mathcal U(d)} \mu \circ w^{-1} (\Delta) dw = \nu(\Delta).
\eeqn   
Now we show that $\mu$ and $\nu$ are mutually absolutely continuous. To see this, observe that if $\mu(\Delta) = 0$ for some Borel measurable subset $\Delta$ of $\partial\mathbb B_d$, then $(ii)$ implies that $\mu \circ u^{-1}(\Delta) = 0$, and hence, $\nu(\Delta) = 0$. Conversely, assume that $\nu(\Delta) = 0$ for some Borel measurable subset $\Delta$ of $\partial\mathbb B_d$. Then we have
\beqn
0 &=& \nu(\Delta) = \int_{\mathcal U(d)} \mu\circ u^{-1}(\Delta) du = \int_{\mathcal U(d)} \int_\Delta d\mu\circ u^{-1}(z) du \\
&=& \int_{\mathcal U(d)} \int_\Delta \frac{d\mu\circ u^{-1}}{d\mu}(z) d\mu(z) du \geqslant k \mu(\Delta) \int_{\mathcal U(d)} du,
\eeqn
where the last inequality follows from $(ii)$. This gives that $\mu(\Delta) = 0$. Thus, $\mu$ is mutually absolutely continuous with respect to a $\mathcal U(d)$-invariant measure $\nu$. By uniqueness of $\mathcal U(d)$-invariant measure supported on $\partial\mathbb B_d$, we conclude that $\nu = c \sigma$ for some $c > 0$ ($c = 1$ if $\nu$ is normalized). Thus, $\mu$ is mutually absolutely continuous with respect to $\sigma$.

Further, by Fubini–Tonelli theorem, we obtain that 
\beqn
\nu(\Delta) &=& \int_{\mathcal U(d)} \mu\circ u^{-1}(\Delta) du = \int_{\mathcal U(d)} \int_\Delta d\mu\circ u^{-1}(z) du \\
&=& \int_{\mathcal U(d)} \int_\Delta \frac{d\mu\circ u^{-1}}{d\mu}(z) d\mu(z) du  = \int_\Delta \int_{\mathcal U(d)} \frac{d\mu\circ u^{-1}}{d\mu}(z) du\, d\mu(z) 
\eeqn
for every Borel measurable subset $\Delta$ of $\partial\mathbb B_d$. This gives that
\beqn\label{Fubini-nu-mu}
\frac{d\nu}{d\mu}(z) = \int_{\mathcal U(d)} \frac{d\mu\circ u^{-1}}{d\mu}(z) du.
\eeqn
Using $(ii)$ together with the fact that $\nu = c \sigma$, we obtain that
\beqn\label{nu-sigma-eq}
k_1 \leqslant \frac{d\sigma}{d\mu}(z) \leqslant K_1\ \text{ for $\mu$-almost every }\ z \in \partial\mathbb B_d,
\eeqn
for some positive constants $k_1$ and $K_1$. Consequently, we get
\beqn
k_1 \leqslant \frac{\|z^\alpha\|_{\mathcal H^2(\sigma)}}{\|z^\alpha\|_{\mathcal H^2(\mu)}} \leqslant K_1\ \text{ for all }\ \alpha \in \mathbb N^d.
\eeqn
Since $\mathscr M_z$ on $\mathcal H^2(\sigma)$ is the Szeg\"o shift, it follows from \cite[Lemma 2.2]{K} that $\mathscr M_z$ on $\mathcal H^2(\mu)$ is similar to the Szeg\"o shift, which verifies $(iii)$.

That $(iii) \implies (i)$ is obvious as the Szeg\"o shift is $\mathcal U(d)$-homogeneous. This completes the proof.
\end{proof}

Now we present the main result of this section.

\begin{theorem}
Let $\mathcal H^2(\beta)$ be spherically balanced with the slice representation $[\mu,\ \mathcal H^2(\gamma)]$, where $\mu$ is a Reinhardt measure supported on $\partial\mathbb B_d$ and $\mathcal H^2(\gamma)$ is a Hilbert space of formal power series in one variable. Suppose that $\mathscr M_z$ on $\mathcal H^2(\beta)$ is bounded. Then $\mathscr M_z$ is weakly $\mathcal U(d)$-homogeneous if and only if $\mathscr M_z$ on $\mathcal H^2(\beta)$ is similar to $\mathscr M_z$ on $\mathcal H^2(\tilde\beta)$ with slice representation $[\sigma,\ \mathcal H^2(\gamma)]$.
\end{theorem}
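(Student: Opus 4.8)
The statement asserts an equivalence for a spherically balanced $\mathcal H^2(\beta)$ with slice data $[\mu,\mathcal H^2(\gamma)]$. The plan is to reduce weak $\mathcal U(d)$-homogeneity of $\mathscr M_z$ on $\mathcal H^2(\beta)$ to the measure-theoretic condition on $\mu$ obtained in Theorem~\ref{wk-hom-H2mu}, and then to transfer similarity from the one-dimensional slice comparison. First I would record that, by the slice representation, $\|z^\alpha\|_{\mathcal H^2(\beta)} = \gamma_{|\alpha|}\,\|z^\alpha\|_{L^2(\partial\mathbb B_d,\mu)}$, so for each fixed degree $n$ the factor $\gamma_n$ is common to all $\alpha$ with $|\alpha|=n$; hence
\[
\max_{|\alpha|=n}\frac{\|z^\alpha\|_{\mathcal H^2(\beta)}}{\sqrt{\alpha!}}\,\max_{|\delta|=n}\frac{\sqrt{\delta!}}{\|z^\delta\|_{\mathcal H^2(\beta)}}
= \max_{|\alpha|=n}\frac{\|z^\alpha\|_{\mathcal H^2(\mu)}}{\sqrt{\alpha!}}\,\max_{|\delta|=n}\frac{\sqrt{\delta!}}{\|z^\delta\|_{\mathcal H^2(\mu)}},
\]
so the moment criterion \eqref{eq-1} of Theorem~\ref{Cu-bdd} holds for $\mathcal H^2(\beta)$ if and only if it holds for $\mathcal H^2(\mu)$. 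By Theorem~\ref{Cu-bdd} this means $\mathscr M_z$ on $\mathcal H^2(\beta)$ is weakly $\mathcal U(d)$-homogeneous if and only if $\mathscr M_z$ on $\mathcal H^2(\mu)$ is weakly $\mathcal U(d)$-homogeneous.

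Next, assuming weak $\mathcal U(d)$-homogeneity, Theorem~\ref{wk-hom-H2mu} gives positive constants $k,K$ with $k\le \frac{d\mu\circ u^{-1}}{d\mu}\le K$ $\mu$-a.e.\ for every $u$, and (repeating the averaging argument in the proof of that theorem, or quoting its conclusion) $\mu$ is mutually absolutely continuous with $\sigma$ with $k_1\le \frac{d\sigma}{d\mu}\le K_1$. I would then form $\tilde\beta$ by $\tilde\beta_\alpha := \gamma_{|\alpha|}\,\|z^\alpha\|_{L^2(\partial\mathbb B_d,\sigma)}$; this is exactly the multisequence whose $\mathcal H^2(\tilde\beta)$ is spherically balanced with slice representation $[\sigma,\mathcal H^2(\gamma)]$ (same $\gamma$, so \cite[Theorem 1.10]{C-K} applies and the slice data is as claimed). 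From $k_1\le \frac{d\sigma}{d\mu}\le K_1$ we get $\sqrt{k_1}\le \|z^\alpha\|_{L^2(\sigma)}/\|z^\alpha\|_{L^2(\mu)}\le \sqrt{K_1}$, hence $\sqrt{k_1}\le \beta_\alpha/\tilde\beta_\alpha\le \sqrt{K_1}$ for all $\alpha\in\mathbb N^d$; by \cite[Lemma 2.2]{K} this gives that $\mathscr M_z$ on $\mathcal H^2(\beta)$ is similar to $\mathscr M_z$ on $\mathcal H^2(\tilde\beta)$, which is the desired conclusion. (One should also check $\mathscr M_z$ is bounded on $\mathcal H^2(\tilde\beta)$, but that follows from the ratio bounds together with boundedness on $\mathcal H^2(\beta)$.)

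For the converse, suppose $\mathscr M_z$ on $\mathcal H^2(\beta)$ is similar to $\mathscr M_z$ on $\mathcal H^2(\tilde\beta)$ with slice representation $[\sigma,\mathcal H^2(\gamma)]$. Since $\sigma$ is $\mathcal U(d)$-invariant, $\mathscr M_z$ on $\mathcal H^2(\tilde\beta)$ is $\mathcal U(d)$-homogeneous: indeed $\mu\circ u^{-1}=\mu$ when $\mu=\sigma$, so $C_u$ is an isometry by Lemma~\ref{lem-Cu-RD} (or a direct computation via the slice representation), and \eqref{Cu-intertwine} shows it intertwines $\mathscr M_z$ with $u\cdot\mathscr M_z$. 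Hence $\mathscr M_z$ on $\mathcal H^2(\beta)$, being similar to a $\mathcal U(d)$-homogeneous tuple, is weakly $\mathcal U(d)$-homogeneous (as noted after the definition). This closes the loop.

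The main obstacle is the first reduction: one must be careful that the maxima over $|\alpha|=n$ genuinely factor through $\mathcal H^2(\mu)$, i.e.\ that the slice identity $\beta_\alpha=\gamma_{|\alpha|}\|z^\alpha\|_{L^2(\mu)}$ is available for the given spherically balanced $\beta$ (this is \cite[Theorem 1.10]{C-K}) and that $\gamma$ is the \emph{same} one-variable sequence on both sides, so that passing from $\mu$ to $\sigma$ changes only the ``angular'' part of the norms. Once that bookkeeping is in place, everything else is a direct appeal to Theorem~\ref{Cu-bdd}, Theorem~\ref{wk-hom-H2mu}, and \cite[Lemma 2.2]{K}.
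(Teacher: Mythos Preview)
Your proof is correct, but it takes a different route to the key reduction step than the paper does. You observe that the slice identity $\beta_\alpha=\gamma_{|\alpha|}\|z^\alpha\|_{L^2(\mu)}$ makes the moment criterion (iii) of Theorem~\ref{Cu-bdd} identical for $\mathcal H^2(\beta)$ and for $\mathcal H^2(\mu)$ (the $\gamma_n$ cancels in the degree-$n$ ratio), so weak $\mathcal U(d)$-homogeneity transfers back and forth between the two spaces, and you then invoke Theorem~\ref{wk-hom-H2mu} as a black box. The paper instead works operator-theoretically: it starts from the boundedness of $C_u$ on $\mathcal H^2(\beta)$ (Proposition~\ref{prop-2.13}), forms $X_u=T_{u^{-1}}C_u:\mathcal H^2(\beta)\to\mathcal H_{u^{-1}}^2(\beta)$, checks via the slice representation that $\mathcal H_{u^{-1}}^2(\beta)$ has slice data $[\mu\circ u^{-1},\mathcal H^2(\gamma)]$, and then uses the block-diagonal structure with respect to $\bigoplus_n\mathrm{Hom}(n)$ to strip off the common factor $\gamma_n^2$ and push the boundedness of $X_u$ and $X_u^{-1}$ down to maps $\mathcal H^2(\mu)\to\mathcal H^2(\mu\circ u^{-1})$; from there it reruns the argument of Theorem~\ref{wk-hom-H2mu}. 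Your shortcut is cleaner and exploits the full strength of the equivalence $(i)\Leftrightarrow(iii)$ in Theorem~\ref{Cu-bdd}; the paper's argument is more self-contained at the operator level and never needs condition~(iii). After the reduction, both proofs finish the same way (define $\tilde\beta_\alpha=\gamma_{|\alpha|}\|z^\alpha\|_{L^2(\sigma)}$ and use the moment-ratio criterion for similarity from \cite{K}). One small point: to apply Theorem~\ref{Cu-bdd} on $\mathcal H^2(\mu)$ you need $\mathscr M_z$ bounded there, which holds since $\mu$ is supported on $\partial\mathbb B_d$; it is worth saying this explicitly.
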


\begin{proof}
Suppose that $\mathscr M_z$ on $\mathcal H^2(\beta)$ is weakly $\mathcal U(d)$-homogeneous. Then by Proposition \ref{prop-2.13}, $C_u$ is bounded and invertible on $\mathcal H^2(\beta)$ for all $u \in \mathcal U(d)$. Now fix $u \in \mathcal U(d)$, and consider $\mathcal H_{u^{-1}}^2(\beta)$. Then for any polynomial $p \in \text{Hom}(n)$, $n \in \mathbb N$, we observe that
\beqn
\|p\circ u^{-1}\|^2_{\mathcal H_{u^{-1}}^2(\beta)} = \|p\|^2_{\mathcal H^2(\beta)} = \gamma_n^2 \|p\|^2_{L^2(\partial\mathbb B_d,\, \mu)} = \gamma_n^2 \int_{\partial\mathbb B_d} |p(z)|^2 d\mu(z).
\eeqn
Since $p \in \text{Hom}(n) \Leftrightarrow p\circ u \in \text{Hom}(n)$, the above equality yields that
\beqn
\|p\|^2_{\mathcal H_{u^{-1}}^2(\beta)} &=& \gamma_n^2 \int_{\partial\mathbb B_d} |p(u \cdot z)|^2 d\mu(z) = \gamma_n^2 \int_{\partial\mathbb B_d} |p(w)|^2 d\mu\circ u^{-1}(w)\\
&=& \gamma_n^2 \|p\|^2_{L^2(\partial\mathbb B_d,\, \mu\circ u^{-1})}.
\eeqn
This gives us a slice representation of $\mathcal H_{u^{-1}}^2(\beta)$. Note that the map $T_{u^{-1}} : \mathcal H^2(\beta) \rar \mathcal H_{u^{-1}}^2(\beta)$, defined as $T_{u^{-1}} f(z) = f(u^{-1} \cdot z)$ for all $f \in \mathcal H^2(\beta)$, is a unitary. Thus, the map $X_u := T_{u^{-1}} C_u : \mathcal H^2(\beta) \rar \mathcal H_{u^{-1}}^2(\beta)$ is a bounded invertible operator. Since 
\beqn 
\mathcal H^2(\beta) = \bigoplus_{n\in \mathbb N} \text{Hom}(n) \ \text{ and }\ \mathcal H_{u^{-1}}^2(\beta) = \bigoplus_{n\in \mathbb N} \text{Hom}(n),
\eeqn
it can be easily seen that $X_u$ is a block diagonal operator with respect to above decompositions. Hence, we obtain that 
\beqn
\|X_u p\|^2_{\mathcal H_{u^{-1}}^2(\beta)} \leqslant \|X_u\| \|p\|^2_{\mathcal H^2(\beta)} \ \text{ for all }\ p \in \text{Hom}(n),\ n \in \mathbb N. 
\eeqn
Consequently, we get
\beqn
\gamma_n^2 \|X_u p\|^2_{L^2(\partial\mathbb B_d,\, \mu\circ u^{-1})} \leqslant \gamma_n^2 \|X_u\| \|p\|^2_{L^2(\partial\mathbb B_d,\, \mu)},
\eeqn
which, in turn, implies that
\beq\label{Xu-H2mu}
\|X_u p\|^2_{L^2(\partial\mathbb B_d,\, \mu\circ u^{-1})} \leqslant \|X_u\| \|p\|^2_{L^2(\partial\mathbb B_d,\, \mu)} \ \text{ for all }\ p \in \text{Hom}(n),\ n \in \mathbb N. 
\eeq
Let $\mathcal H^2(\mu)$ be the closure of polynomials in $L^2(\partial\mathbb B_d,\, \mu)$. Since $\mu$ is Reinhardt, the monomials are orthogonal in $\mathcal H^2(\mu)$. Therefore,  
\beqn 
\mathcal H^2(\mu) = \bigoplus_{n\in \mathbb N} \text{Hom}(n) \ \text{ and }\ \mathcal H^2(\mu \circ u^{-1}) = \bigoplus_{n\in \mathbb N} \text{Hom}(n).
\eeqn
Note that \eqref{Xu-H2mu} gives that $X_u : \mathcal H^2(\mu) \rar \mathcal H^2(\mu \circ u^{-1})$ is bounded, and similar arguments for $X_u^{-1} : \mathcal H_{u^{-1}}^2(\beta) \rar \mathcal H^2(\beta)$ yield that $X_u : \mathcal H^2(\mu) \rar \mathcal H^2(\mu \circ u^{-1})$ is invertible. Now proceeding as in the proof of Theorem \ref{wk-hom-H2mu}, we obtain that $\mathscr M_z$ on $\mathcal H^2(\mu)$ is similar to the Szeg\"o shift, that is $\mathscr M_z$ on $\mathcal H^2(\sigma)$.  
Now consider 
\beqn
\tilde\beta_\alpha := \gamma_{|\alpha|} \|z^\alpha\|_{L^2(\partial\mathbb B_d,\, \sigma)}\ \text{ for all }\ \alpha \in \mathbb N^d.
\eeqn 
Then it is easy to see that $\mathscr M_z$ on $\mathcal H^2(\tilde\beta)$ is $\mathcal U(d)$-homogeneous. Further, it follows from \cite[Theorem 2.1]{K} that $\mathscr M_z$ on $\mathcal H^2(\beta)$ is similar to $\mathscr M_z$ on $\mathcal H^2(\tilde\beta)$. Clearly, $\mathcal H^2(\tilde\beta)$ has the slice representation $[\sigma,\ \mathcal H^2(\gamma)]$. The converse is obvious. This completes the proof. 
\end{proof}

\end{document}